\spnewtheorem*{condition}{Condition}{\bf}{\it}
\spnewtheorem{assumption}{Assumption}[section]{\bf}{\it}
\spnewtheorem{pb}{Problem}[section]{\bf}{\it}
\spnewtheorem{algorithm}{Algorithm}[section]{\bf}{\rm}
\newcommand{\Hi}{\mathcal{H}}
\newcommand{\R}{\mathbb{R}}%
\newcommand{\ep}{\varepsilon}%
\renewcommand{\>}{\right\rangle}
\newcommand{\<}{\left\langle}
\newcommand{\zer}{\mathrm{zer}}
\newcommand{\Gr}{\mathrm{Gr}}
\DeclareMathOperator*\dist{dist}%
\DeclareMathOperator*\inte{int}%
\DeclareMathOperator*\dom{dom}%
\DeclareMathOperator*\ran{ran}%
\DeclareMathOperator*\prox{prox}%
\DeclareMathOperator*\proj{proj}%
\DeclareMathOperator*\argmin{argmin}
\begin{document}

\title{Generalized Forward-Backward Splitting with Penalization for Monotone Inclusion Problems\thanks{This work  is partially supported
		by the Thailand Research Fund under the project RSA5880028.}
	}
%\subtitle{Do you have a subtitle?\\ If so, write it here}

\titlerunning{Generalized Forward-Backward Splitting with Penalization}        % if too long for running head

\author{Nimit Nimana   \and
	Narin Petrot
}

%\authorrunning{Short form of author list} % if too long for running head

\institute{     Nimit Nimana \at
	Department of Mathematics, Faculty of Science, Khon Kaen University, Khon Kaen, 40002 Thailand \\
	\email{nimitni@kku.ac.th} 
	%             \emph{Present address:} of F. Author  %  if needed
	\and 
	Narin Petrot \at
	Department of Mathematics, Faculty of Science, Naresuan University, Phitsanulok,  65000, Thailand, Center of Excellence in Nonlinear Analysis and Optimization, Faculty of Science, Naresuan University,
	Phitsanulok, 65000, Thailand\\
	\email{narinp@nu.ac.th}           %  \\    
}

\date{Received: date / Accepted: date}
% The correct dates will be entered by the editor

\maketitle

\begin{abstract}
We introduce a generalized forward-backward splitting method with penalty term for solving monotone inclusion problems involving the sum of a finite number of maximally monotone operators and the normal cone to the nonempty set of zeros of another maximally monotone operator. We show weak ergodic convergence of the generated sequence of iterates to a solution of the considered monotone inclusion problem, provided that the condition corresponding to the Fitzpatrick function of the operator describing the set of the normal cone is fulfilled. Under strong monotonicity of an operator, we show strong convergence of the iterates. Furthermore, we utilize the proposed method for minimizing a large-scale hierarchical minimization problem concerning the sum of differentiable and nondifferentiable convex functions subject to the set of minima of another differentiable convex function. We illustrate the functionality of the method through numerical experiments addressing constrained elastic net and generalized Heron location problems.
\keywords{Monotone operator \and monotone inclusion \and hierarchical optimization \and forward-backward algorithm}

%\PACS{PACS code1 \and PACS code2 \and more}
\subclass{47H05 \and 65K05 \and 65K10 \and 90C25}
\end{abstract}

\section{Introduction}
\label{intro}
Let $\Hi$ be a real Hilbert space with the norm and inner product given by $\|\cdot\|$ and $\langle\cdot,\cdot\rangle$, and $m$ be a positive integer.  We firstly recall the problem of minimizing an additive component function:
\begin{eqnarray}\label{opt-old}%
\begin{array}{ll}
\textrm{minimize}\indent \sum_{i=1}^mf_i(x)\\
\textrm{subject to}\indent x\in X,
\end{array}%
\end{eqnarray}
where, for all $i=1,\ldots,m$, $f_i:\Hi\to (-\infty,+\infty]:=\R\cup\{+\infty\}$ is a proper convex lower semicontinuous objective function and $X$ is a nonempty closed  convex subset of $\Hi$. Due to a practical point of view, many real-world problems can be formulated in the form of the problem (\ref{opt-old}). For instance, in large-scale machine learning (where $m$ is very large), each component function $f_i$ will measure the sufficiency of the model's output corresponding to a parameter $x\in\Hi$ and an observed data indexed by $i$. In this situation, minimizing the sum of these component functions will give a parameter $x\in\Hi$ in which it 
fits the observed data.  A classical example is the least squares regression
\begin{eqnarray*}\label{opt-lqr}%
	\begin{array}{ll}
		\textrm{minimize}\indent \sum_{i=1}^m\left(\langle a_i,x\rangle-b_i\right)^2\\
		\textrm{subject to}\indent x\in \Hi,
	\end{array}%
\end{eqnarray*} 
where $a_i\in \Hi$ and $b_i\in\R$ are given data, for all $i=1,\ldots,m$. We refer to Bottou, Curtis and Nocedal \cite{BCN16} for an attractive review and \cite{J15,SLB17,SZ13} for more contributions on large-scale machine learning.

It is well known that algorithms for approximating a solution of (\ref{opt-old}) may involve the metric projection onto the feasible set $X$. However, in some situations such set $X$ is not simple enough so that  the projection cannot be easily implemented. In order to deal with this situation, Attouch et al. \cite{ACP11} proposed an  exterior penalization scheme involving the gradient of a corresponding constrained function instead of computing the metric projection onto constrained sets directly.  %For instance, putting $X:=\bigcap_{j=1}^n\{x\in \Hi:g_j(x)\leq0\}$, where $g_j$ is a convex (Fr\'echet) differentiable function on $\Hi$, one can see that the set $X$ is not simple. To deal this situation, one can take the function $g$ as the square of the sum of the positive parts of $g_j$ so that $X=\argmin g$, see \cite{P12}. 
Although, sometimes  the set $X$ is simple enough so that the metric projection has a closed-form expression,  it may happen that the computation is very costly and time-consuming. For instance, when $\Hi=\R^n$, it is well known that the set $X:=\{x\in\R^n:Ax=b\}$, where $A$ is an $r\times n$ matrix  (with $r<n$) with linearly independent rows and $b\in\R^n$, has infinitely many elements and the metric projection onto this set is $\proj_X(x):=x-A^\top(AA^\top)^{-1}(Ax-b)$, which is not easy to invert. Of course, one can take the function $g(x)=\frac{1}{2}\|Ax-b\|^2$ so that $X=\argmin g$ and the gradient is $\nabla g(x)= A^\top(Ax-b)$, which is simpler to compute than the metric projection.  It is worth noting that the consideration of particular structure by using the constrained function has been applied in several situations such as optimal control, partial differential equation, signal and image processing, see \cite{ACP11,ACP11-2,P12,NP13} for more  insight details. These advantages naturally motivate us to consider the particular structure of the constrained set  $X=\argmin g$,  which leads us to consider the following hierarchical minimization problem:

\begin{eqnarray}\label{opt}%
\begin{array}{ll}
\textrm{minimize}\indent \sum_{i=1}^mf_i(x)\\
\textrm{subject to}\indent x\in\argmin g,
\end{array}%
\end{eqnarray} 
where, for all $i=1,\ldots,m$, $f_i:\Hi\to (-\infty,+\infty]$ is a proper convex lower semicontinuous objective function, and $\argmin g$ is the set of minima of a  convex (Fr\'echet) differentiable  function $g:\Hi\to \R$, which we will assume that it is nonempty and $\min g=0$. Another approach to motivate  problem (\ref{opt}) in the context of nonautonomous multiscaled differential inclusion is due to \cite{AC10}. We refer the reader to \cite{ACP11,ACP11-2,CNP16,NP13,P12, BCN17,BCN17-2} for a rich literature devoted to problem  (\ref{opt}).

Assume that the solution set of the problem (\ref{opt}) is nonempty and  some qualification conditions hold, for instance,
\begin{eqnarray}\label{QC-opt}%
\argmin g\cap\bigcap_{i=1}^m\inte\dom(f_i)\neq\emptyset
\end{eqnarray}
(for more qualification conditions, see \cite[Proposition 27.8]{BC11}). Then, problem (\ref{opt}) is equivalent to the following problem:
find $x\in \Hi$ such that
\begin{eqnarray}\label{MIP-opt}%
0\in \sum_{i=1}^m\partial f_i(x)+N_{\argmin g}(x).
\end{eqnarray}
Note that the nonempty subdifferential of a proper convex lower semicontinuous function is a maximally monotone operator and the set of minima of such function is the set of zeros of the subdifferential. Due to the equivalence of problems (\ref{opt}) and (\ref{MIP-opt}), in this paper, we shall deal with the following monotone inclusion problem.
\begin{pb}[Monotone Inclusion Problem (MIP)] \label{mip}Find $x\in \Hi$ such that
	\begin{eqnarray}\label{MIP}%
	0\in \sum_{i=1}^mA_i(x)+B(x)+N_{\zer(C)}(x),
	\end{eqnarray}
	where, for all $i=1,\ldots,m$, $A_i:\Hi\rightrightarrows \Hi$ is a maximally monotone operator and $B,C: \Hi\to \Hi$ are cocoercive operators. 
\end{pb}

Note that if $A_i=\partial f_i$, for all $i=1,\ldots,m$, $B(x)=0$ for all $x\in\Hi$ and $C=\nabla g$, then the monotone inclusion problem (\ref{MIP-opt}) is a special case of MIP. Moreover, by using some suitable qualification condition (e.g., (\ref{QC-opt})),  the hierarchical minimization problem (\ref{opt}) is also a particular case of MIP; see Section 4 for further details.

We let the set of all zeros of the operator $C$ be denoted by $\zer(C):=\{z\in \Hi:0= C(z)\}$,  and let the normal cone to the set $\zer(C)$ be denoted by $N_{\zer(C)}$. We shall assume from now on that $$\zer(\sum_{i=1}^mA_i+B+N_{\zer(C)})\neq\emptyset.$$

In order to approximate a solution of MIP, we remark that Bo\c{t} and Csetnek \cite{BC14-penalty} investigated a particular situation of MIP of the form
	\begin{eqnarray}\label{MIPm=1}%
	0\in A(x)+B(x)+N_{\zer(C)}(x),
	\end{eqnarray}
(say, $A:=A_1$). They proposed a forward-backward algorithm of penalty type for solving  problem (\ref{MIPm=1}). In each iteration, the algorithm performs a forward step with the operator $B$ together with the penalization term with respect to $C$ and a backward step by the resolvent of $A$, that is,
$$x_{k+1}:=J_{\alpha_kA}\left(x_{k}-\alpha_kB(x_k)-\alpha_k\beta_kC(x_k)\right) \indent\forall k\geq1,$$ 
where $x_1\in \Hi$ is arbitrarily chosen. To guarantee the convergence of the sequence generated by their proposed algorithm, they introduced a condition formulated by using the Fitzpatrick function associated with the operator $C$  (see Assumption \ref{assumption-IFB-P1} (H2)).  In their paper, they also presented the forward-backward-forward algorithm, which is known as Tseng's type algorithm with penalty term, for solving  (\ref{MIPm=1}) in the case where $B$ and $C$ are monotone and Lipschitz continuous.  As a continuation of these developments, in \cite{BC14-viet}, the same authors focused on solving a monotone inclusion problem involving linearly composed and parallel-sum monotone operators and the normal cone to the set of zeros of another monotone and Lipschitz continuous operator.  An application on image inpainting has been also presented.  In \cite{BC16}, the same authors considered the penalty type algorithm with inertial effect of the one proposed in  \cite{BC14-penalty} for solving  (\ref{MIPm=1}). In a similar fashion, Banert and Bo\c{t}  \cite{BB15} proposed the backward penalty type algorithm for solving  (\ref{MIPm=1}) in the case where $C$ is a maximally monotone operator.

On the other hand, Passty \cite{P79} considered a backward splitting method for solving MIP in the case where  $B=C=0$. The proposed algorithm can be read as follows:
let $x_1\in \Hi$ be chosen arbitrarily. For each $k\geq1$, one sets $\psi_{0,k}:=x_k$. One computes
\begin{eqnarray*}\psi_{i,k}:=J_{\alpha_kA_i}(\psi_{i-1,k}) \indent \textrm{ for all } i=1,\ldots,m,
\end{eqnarray*}
and put $$x_{k+1}:=\psi_{m,k}.$$
The advantage of this method is that one can evaluate the resolvents $J_{A_i}$, $i=1,\ldots,m$ individually  rather than computing the resolvent of the sum $\sum_{i=1}^mA_i$, which is quite difficult to invert in general.

Motivated by these methods, we propose a generalized forward-backward scheme with penalization term for solving MIP. By means of the condition involving the Fitzpatrick function, we prove weak ergodic convergence of the generated sequence to a solution of  MIP. Furthermore, assuming that one of the operators $A_i$ is strongly monotone, we prove strong convergence result for the generated sequence to the unique solution of  MIP. As our primary convince, convergence results for hierarchical large-scale minimization problems will be also discussed.

This paper is organized as follows: Section 2 contains some notations and useful  tools. The main algorithm and its convergence results are presented in Section 3. In Section 4, we propose a direction to convergence for hierarchical large-scale minimization problems. Finally, numerical examples are illustrated in Section 5.

\section{Preliminaries}

For convenience we present here some notations which are used throughout this paper, the reader may consult \cite{BC11,Z02} for further details.  The strong convergence and weak convergence of a sequence
$\{x_k\}_{k=1}^\infty$ to $x\in \Hi$ are denoted by $x_k\to x$ and $x_k\rightharpoonup x$, respectively.

Let $A:\Hi\rightrightarrows \Hi$ be a set-valued operator. We denote by $\Gr(A) :=\{(x,u)\in \Hi\times \Hi:u\in Ax\}$ its graph, by  $\mathrm{dom}(A) :=\{x\in \Hi:Ax\neq\emptyset\}$ its domain, and by $\ran(A) :=\{u\in \Hi:\exists x \in \Hi, u\in Ax\}$ its range. The set-valued operator $A$ is said to be \textit{monotone} if 
$\langle x-y,u-v\rangle\geq0,$ for all $(x,u), (y,v)\in \Gr(A)$, and it is called \textit{maximally monotone} if its graph is not properly contained in the graph of any other monotone operators. Moreover, $A$ is said to be {\it strongly monotone} with modulus $\alpha>0$ if $A-\alpha I$ is monotone, i.e., $\langle x-y,u-v\rangle\geq\alpha\|x-y\|^2$ for all $(x,u),(y,v)\in\Gr(A)$. Note that if $A$ is maximally monotone, then $\zer(A)$ is a convex closed set. Notice that if $A$ is maximally monotone and strongly monotone, then $\zer(A)$ is a nonempty set and is a singleton. Furthermore, for a maximally monotone operator $A$, we have
\begin{eqnarray}\label{zeromaximal}z \in \zer(A) \textrm{ if and only if } \langle u-z, w\rangle \geq 0 \textrm{ for all } (u, w) \in \Gr(A).
\end{eqnarray}

We denote by $I:\Hi\to \Hi$  the identity operator on $\Hi$.  For a set-valued operator $A:\Hi \rightrightarrows \Hi$, we define the {\it resolvent} of $A$, $J_{A}:\Hi\rightrightarrows \Hi$, by
$J_{A}:=(I+A)^{-1}$. It is well known that if $A$ is maximally monotone, then the resolvent of $A$ is single-valued.

Given $\mu>0$, the operator $A:\Hi\to \Hi$ is said to be {\it cocoercive} (or {\it inverse strongly monotone}) with parameter $\mu$ if $\langle x-y,Ax-Ay\rangle \geq\mu\|Ax-Ay\|^2$ for all $x,y\in \Hi$. 

For a monotone operator $A:\Hi\rightrightarrows \Hi$, the {\it Fitzpatrick function} \cite{F88} associated to the monotone operator $A$, $\varphi_A :\Hi\times \Hi\to(-\infty,+\infty]$, is defined by
$$\varphi_A(x,u):= \sup_{(y,v)\in\Gr(A)}\{\langle x,v\rangle+\langle y,u\rangle-\langle y,v\rangle\}.$$
Note that $\varphi_A$ is a convex  lower semicontinuous function. In addition, if $A$ is maximally monotone, then
$$\varphi_A(x,u)\geq \langle x,u \rangle\indent \textrm{ for all } (x,u)\in \Hi\times \Hi$$
and the equality holds provided that $(x,u)\in\Gr(A)$.

For a function $f:\Hi\to (-\infty,+\infty]$ we denote by $\mathrm{dom}(f):=\{x\in \Hi:f(x)<+\infty\}$ its {\it effective domain} and say that it is {\it proper} if $\mathrm{dom}(f)\neq\emptyset$ and $f(x)\neq-\infty$ for all $x\in \Hi$. For a  function $f:\Hi\to(-\infty,+\infty]$, the {\it conjugate function} of $f$ is the proper convex lower semicontinuous function $f^*:\Hi\to(-\infty,+\infty]$ defined by $$f^*(x):=\sup_{u\in \Hi}\{\langle x,u\rangle-f(u)\}$$
for all $x\in \Hi$. The {\it subdifferential} of $f$ at $x\in \Hi$ with $f(x)\in \R$ is the set 
$$\partial f(x):=\{x^*\in \Hi:f(y)\geq f(x)+\langle x^*, y-x\rangle \textrm{ for all } x\in \Hi\},$$ 
and we have a convention that $\partial f(x):=\emptyset$ if $f(x)=+\infty$.
Note that the subdifferential of a proper convex lower semicontinuous function is a maximally monotone operator and it holds that
\begin{eqnarray}\label{fptrick}
\varphi_{\partial f}(x,u)\leq f(x)+f^*(u)\indent\textrm{ for all }(x,u)\in \Hi\times \Hi.
\end{eqnarray}

\vspace{2ex}
For $r>0$ and $x\in \Hi$, we denote by $\prox_{rf}(x)$ the {\it proximal point} of parameter $r$ of a proper convex lower semicontinuous function  $f$ at $x$, which is the unique optimal solution of the optimization problem
$$\min_{u\in \Hi} f(u)+\frac{1}{2r}\|u-x\|^2.$$
Note that $\prox_{rf}=J_{r\partial f}$ and it is a single-valued operator.

We say that $f$ is {\it strongly convex} with modulus $\alpha>0$ if $f-\frac{\alpha}{2}\|\cdot\|^2$ is convex. Note that if $f$ is $\alpha-$strongly convex, then its subdifferential is $\alpha-$strongly monotone.

Given a nonempty closed convex set $X\subset \Hi$, its {\it indicator function} is defined as $\delta_X(x)=0$ if $x\in X$ and $+\infty$ otherwise. The {\it support function} of $X$ at a point $x$ is defined by
$$\sigma_X(x):=\sup_{z\in X}\langle x,z\rangle.$$ 
The {\it normal cone} to $X$ at $x$ is $$N_X(x):=\{x^*\in \Hi:\langle x^*,z-x\rangle\leq0 \textrm{ for all }z\in X\},$$
if $x\in X$ and $\emptyset$ otherwise. Note that $x^* \in N_X(x)$ if and only if $\sigma_X(x^*) = \langle x^*,x\rangle$.

Let $\{x_k\}_{k=1}^{\infty}$ be a sequence in $\Hi$ and let $\{\alpha_k\}_{k=1}^{\infty}$ be a sequence of positive real numbers such that $\sum_{k=1}^{\infty}\alpha_k=+\infty$. We define the sequence $\{z_k\}_{k=1}^\infty$ of weighted averages by
\begin{eqnarray}\label{ergodic-eqn}z_k:=\frac{1}{\tau_k}\sum_{n=1}^{k}\alpha_nx_n,\indent \textrm{ where }\tau_k=\sum_{n=1}^k\alpha_n\indent \forall k\geq1.
\end{eqnarray}

The following lemma is a key tool for proving the convergence results.
\begin{lemma}[Opial-Passty]\label{lemma-OP}
	Let $\Hi$ be a real Hilbert space, and let $C\subseteq \Hi$ be a nonempty set. Let $\{x_k\}_{k=1}^\infty$  and $\{z_k\}_{k=1}^\infty$ (defined as in (\ref{ergodic-eqn})) be sequences satisfying the following: 
	
	(i) For every $z\in C$, $\lim_{k\to+\infty}\|x_k-z\|$ exists.
	
	(ii) Every sequential weak cluster point of $\{x_k\}_{k=1}^\infty$ (respectively, $\{z_k\}_{k=1}^\infty$) lies in $C$.
	
	\noindent	Then the sequence $\{x_k\}_{k=1}^\infty$ (respectively, $\{z_k\}_{k=1}^\infty$)  converges weakly to a point in $C$.
\end{lemma}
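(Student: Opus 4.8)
The plan is to prove both assertions through the same three-step template: show the sequence is bounded, note that all its weak cluster points lie in $C$ by hypothesis (ii), and then establish that the weak cluster point is in fact \emph{unique}; uniqueness together with boundedness in a Hilbert space forces weak convergence. The assertion for $\{x_k\}$ is the classical Opial argument, whereas the assertion for $\{z_k\}$ needs an additional algebraic manipulation in the spirit of Passty, because we are \emph{not} given that $\lim_{k\to+\infty}\|z_k-z\|$ exists.

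For $\{x_k\}$, fixing any $z_0\in C$ shows via (i) that $\{\|x_k-z_0\|\}$ converges and hence is bounded, so $\{x_k\}$ is bounded and admits weak cluster points, all lying in $C$ by (ii). To see uniqueness, suppose $x'$ and $x''$ are two such cluster points. Expanding
\begin{equation*}
\|x_k-x'\|^2-\|x_k-x''\|^2=2\langle x_k,\, x''-x'\rangle+\|x'\|^2-\|x''\|^2,
\end{equation*}
the left-hand side converges by (i), so $\langle x_k,\, x''-x'\rangle$ converges. Passing to the limit along a subsequence with $x_k\rightharpoonup x'$ and along another with $x_k\rightharpoonup x''$ gives $\langle x',\, x''-x'\rangle=\langle x'',\, x''-x'\rangle$, i.e. $\|x''-x'\|^2=0$. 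Thus $x'=x''$ and $x_k\rightharpoonup x'\in C$.

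For $\{z_k\}$, each $z_k$ is a convex combination of the bounded points $x_1,\dots,x_k$ (the weights $\alpha_n/\tau_k$ are positive and sum to $1$), so $\{z_k\}$ is bounded with all weak cluster points in $C$ by (ii). The obstacle is that $\lim_{k\to+\infty}\|z_k-z\|$ need not exist, so Opial cannot be invoked verbatim. Instead, for $z\in C$ expand $\|x_n-z\|^2=\|x_n\|^2-2\langle x_n,z\rangle+\|z\|^2$, average against the weights $\alpha_n/\tau_k$, and use $z_k=\tfrac{1}{\tau_k}\sum_{n=1}^k\alpha_n x_n$ to obtain
\begin{equation*}
\frac{1}{\tau_k}\sum_{n=1}^k\alpha_n\|x_n-z\|^2=S_k-2\langle z_k,z\rangle+\|z\|^2,\qquad S_k:=\frac{1}{\tau_k}\sum_{n=1}^k\alpha_n\|x_n\|^2.
\end{equation*}
Since $\|x_n-z\|^2$ converges by (i) and $\tau_k=\sum_{n=1}^k\alpha_n\to+\infty$, the weighted Ces\`aro (Toeplitz) theorem shows the left-hand side converges. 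The decisive feature is that $S_k$ does \emph{not} depend on $z$: for any two weak cluster points $z',z''\in C$, subtracting the corresponding identities cancels $S_k$ and shows that $\langle z_k,\, z''-z'\rangle$ converges. The same subsequential limiting argument as before then yields $\|z''-z'\|^2=0$, so the weak cluster point of $\{z_k\}$ is unique and $z_k\rightharpoonup z'\in C$.

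I expect the genuine difficulty to be exactly this $\{z_k\}$ case: Fej\'er-type monotonicity is hypothesized only for $\{x_k\}$ and need not be inherited by its weighted averages, so one must avoid requiring convergence of $\|z_k-z\|$ and instead extract convergence of the single scalar $\langle z_k,\, z''-z'\rangle$. This is precisely what the cancellation of the $z$-independent average $S_k$ achieves, with the weighted Ces\`aro limit made legitimate by the standing assumption $\sum_{k=1}^{\infty}\alpha_k=+\infty$.
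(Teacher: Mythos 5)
Your proof is correct. The paper states this lemma without proof, citing it as a known result (Opial's lemma together with Passty's ergodic variant), and your argument is exactly the standard one behind those citations --- the quadratic-expansion uniqueness argument for $\{x_k\}_{k=1}^\infty$, and for $\{z_k\}_{k=1}^\infty$ the weighted Ces\`aro limit (valid since $\tau_k\to+\infty$) combined with cancellation of the $z$-independent average $S_k$ --- so there is no divergence from the paper's approach and no gap to report.
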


In order to show the convergence results, we also need the following fact. 
\begin{lemma}
	\label{lemma-acp10}\cite{polyak} Let $\{a_k\}_{k=1}^{\infty}$, $\{b_k\}_{k=1}^{\infty}$, and $\{c_k\}_{k=1}^{\infty}$ be real sequences. Assume that $\{a_k\}_{k=1}^{\infty}$ is bounded from below, and $\{b_k\}_{k=1}^{\infty}$ is nonnegative. If  $$a_{k+1}-a_k+b_k\leq c_k, \forall k\geq1$$ and $\sum_{k=1}^{\infty}c_k<+\infty$, then $\lim_{k\to+\infty}a_k$ exists and $\sum_{k=1}^{\infty}b_k<+\infty$.
\end{lemma}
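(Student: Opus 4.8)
The plan is to convert the given one-sided recursion into a genuinely monotone (non-increasing) auxiliary sequence by absorbing the partial sums of $\{c_k\}$, and then to exploit monotonicity together with the lower bound on $\{a_k\}$. Concretely, I would introduce
$$w_k:=a_k-\sum_{n=1}^{k-1}c_n\quad\forall k\geq1,$$
with the empty sum for $k=1$ understood as $0$. Using the hypothesis $a_{k+1}-a_k+b_k\leq c_k$ and the nonnegativity of $b_k$, a direct computation gives $w_{k+1}-w_k=(a_{k+1}-a_k-c_k)\leq-b_k\leq0$, so that $\{w_k\}_{k=1}^\infty$ is non-increasing.

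The next step is to show that $\{w_k\}_{k=1}^\infty$ is bounded from below, which is where the convergence of $\sum_{k=1}^\infty c_k$ enters. Since $\sum_{n=1}^\infty c_n$ converges, its sequence of partial sums is bounded, say $\big|\sum_{n=1}^{k-1}c_n\big|\leq K$ for all $k\geq1$; combining this with a lower bound $a_k\geq M$ yields $w_k\geq M-K$. A non-increasing sequence that is bounded from below converges, so $\lim_{k\to+\infty}w_k$ exists. Because the partial sums $\sum_{n=1}^{k-1}c_n$ converge to $\sum_{n=1}^\infty c_n=:S$, I then recover $\lim_{k\to+\infty}a_k=\lim_{k\to+\infty}w_k+S$, establishing the existence of $\lim_{k\to+\infty}a_k$.

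For the summability of $\{b_k\}_{k=1}^\infty$, I would sum the rearranged inequality $b_k\leq(a_k-a_{k+1})+c_k$ over $k=1,\dots,N$, telescoping the first bracket to obtain $\sum_{k=1}^N b_k\leq a_1-a_{N+1}+\sum_{k=1}^N c_k$. Letting $N\to+\infty$ and invoking the just-proved existence of $\lim_{k\to+\infty}a_k$ together with the convergence of $\sum_{k=1}^\infty c_k$ gives the finite bound $\sum_{k=1}^\infty b_k\leq a_1-\lim_{k\to+\infty}a_k+S<+\infty$, which is the second claim.

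Regarding difficulty, there is no genuine obstacle here, since this is a deterministic special case of the Robbins--Siegmund lemma; the only point requiring a moment of care is the justification of the lower bound on $\{w_k\}_{k=1}^\infty$, which rests on the (easily overlooked) fact that the partial sums of a convergent real series remain bounded even when the summands $c_k$ are not assumed to be sign-definite.
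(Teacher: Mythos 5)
Your proof is correct: the auxiliary sequence $w_k:=a_k-\sum_{n=1}^{k-1}c_n$ is non-increasing and bounded below (since the partial sums of a convergent series are bounded), so it converges, and the telescoping bound $\sum_{k=1}^N b_k\leq a_1-a_{N+1}+\sum_{k=1}^N c_k$ then gives summability of $\{b_k\}$. Note that the paper itself offers no proof of this lemma---it is quoted from Polyak's book as a known fact---so there is nothing to compare against; your argument is the standard one and is complete.
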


\section{Convergence Results}

In this section, we are interested in the following generalized forward-backward method with penalty term (in short, GFBP)  for solving MIP. 
\begin{algorithm}[GFBP] \label{algorithm-IFB-P1}
	\textit{Initialization}: Choose the positive real sequences $\{\alpha_k\}_{k=1}^\infty$, $\{\beta_k\}_{k=1}^\infty$, and take arbitrary $x_1\in \Hi$. \\
	\textit{Iterative Step}: For a given current iterate $x_{k}\in \Hi$ ($k\geq 1$), compute $$\psi_{0,k}:=x_{k}-\alpha_kB(x_k)-\alpha_k\beta_kC(x_k).$$ 
	For $i=1,\ldots,m$, compute
	\begin{eqnarray*}\psi_{i,k}:=J_{\alpha_kA_i}(\psi_{i-1,k})
	\end{eqnarray*}
	and define $$x_{k+1}:=\psi_{m,k}.$$
\end{algorithm}

\begin{remark} 
(i) When $m=1$, GFBP reduces to the algorithm  proposed and investigated in \cite{BC14-penalty} for solving (\ref{MIPm=1}).  
On the other hand, if  $B(x)=C(x)= 0$ for all $x\in\Hi$, then  GFBP turns out to be the $m-$fold backward algorithm  proposed in \cite{P79}.

(ii) Recall that when $C(x)=0$ for all $x\in \Hi$, MIP turns out to be the problem of finding $x\in \Hi$ such that
	\begin{eqnarray}\label{MIP-noncone}%
	0\in \sum_{i=1}^mA_i(x)+B(x),
	\end{eqnarray}
	which was investigated by the authors in \cite{RFP13}. They proposed a generalized forward-backward iterative scheme which involves the computation of the cocoercive operator in a forward step and the parallel computation of the resolvents of the $A_i$'s in a subsequent backward step. They proved that the sequence generated by the algorithm converges weakly in the setting of Hilbert space. The authors also showed the applications of the proposed method to  image processing problems. Some more works related to such proposed direction are due to, e.g., \cite{CP12}, \cite{V13}, and \cite{RL15}. It is worth mentioning that our GFBP is different from such methods. In fact,  in order to update the next iteration, those methods allow one to compute the resolvents of $A_i$ at the current iteration for all $i=1,\ldots,m$ and subsequently  combine them to obtain an update to be the next iteration, whereas our GFBP scheme offers one to compute the resolvents of $A_i$ incrementally (see Algorithm \ref{algorithm-IFB-P1}).
 \end{remark}

For the convergence results, we need the following assumption.
\begin{assumption}\label{coco-assump}	
The operators $B$ and $C$ are cocoercive with parameters $\mu$ and $\eta$, respectively. 
\end{assumption}

The following technical lemma will be useful in the convergence analysis of Algorithm \ref{algorithm-IFB-P1}.

\begin{lemma}\label{IFBP-lemma1}
	Let $(u,w)\in \Gr(\sum_{i=1}^mA_i+B+N_{\zer(C)})$, $v_i\in A_i(u)$ for all $i=1,\ldots,m$ and $p\in N_{\zer(C)}(u)$ be such that $w=\sum_{i=1}^mv_i+B(u)+p$. If Assumption \ref{coco-assump} holds, then the following inequality holds for all $k\geq 1$ and  $\ep>0$:
	\begin{eqnarray}\label{IFBP-lemma1-eqn}
	\|x_{k+1}-u\|^2&-&\|x_k-u\|^2+\left(1-\frac{\ep}{1+\ep}\right)\sum_{i=1}^m\|\psi_{i,k}-\psi_{i-1,k}\|^2
	\nonumber\\
	&+&\left(\frac{2\mu}{1+\ep}-\left(2+\frac{\ep}{1+\ep}\right)\alpha_k\beta_k\right)\alpha_k\beta_k\|C(x_k)\|^2\nonumber\\
	&+&\frac{\ep}{1+\ep}\alpha_k\beta_k\<x_k-u,C(x_k)\>\nonumber\\
	&\leq&\left(\left(4+\frac{2\ep}{1+\ep}\right)\alpha_k-2\eta\right)\alpha_k\|B(x_k)-B(u)\|^2\nonumber\\
	&&+\left(4+\frac{2\ep}{1+\ep}\right)\alpha_k^2\|B(u)\|^2+2\alpha_k\<u-x_k,w\>\nonumber\\
	&&+\frac{2(m(m+1)+1)(1+\ep)}{\ep}\alpha_k^2\sum_{i=1}^m\|v_i\|^2\nonumber\\
	&&+\frac{\ep}{1+\ep}\alpha_k\beta_k\left[\sup_{u\in\zer(C)}\varphi_{C}\left(u,\frac{2p}{\frac{\ep}{1+\ep}\beta_k}\right)-\sigma_{\zer(C)}\left(\frac{2p}{\frac{\ep}{1+\ep}\beta_k}\right)\right].\nonumber\\
	\end{eqnarray}
	
\end{lemma}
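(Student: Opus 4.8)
The plan is to track $\|x_{k+1}-u\|^2-\|x_k-u\|^2$ by inserting the intermediate iterates and peeling off the resolvent steps one at a time. First I would write $\|x_{k+1}-u\|^2-\|x_k-u\|^2=(\|\psi_{m,k}-u\|^2-\|\psi_{0,k}-u\|^2)+(\|\psi_{0,k}-u\|^2-\|x_k-u\|^2)$. For the first bracket I would telescope over $i=1,\dots,m$ via the identity $\|\psi_{i,k}-u\|^2-\|\psi_{i-1,k}-u\|^2=2\<\psi_{i,k}-\psi_{i-1,k},\psi_{i,k}-u\>-\|\psi_{i,k}-\psi_{i-1,k}\|^2$ together with the resolvent inclusion $\tfrac1{\alpha_k}(\psi_{i-1,k}-\psi_{i,k})\in A_i(\psi_{i,k})$; testing monotonicity of $A_i$ against $(u,v_i)\in\Gr(A_i)$ gives $2\<\psi_{i,k}-\psi_{i-1,k},\psi_{i,k}-u\>\le2\alpha_k\<v_i,u-\psi_{i,k}\>$, so the first bracket is at most $2\alpha_k\sum_{i=1}^m\<v_i,u-\psi_{i,k}\>-\sum_{i=1}^m\|\psi_{i,k}-\psi_{i-1,k}\|^2$. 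The second bracket I would obtain by expanding the square with $\psi_{0,k}=x_k-\alpha_kB(x_k)-\alpha_k\beta_kC(x_k)$, producing $-2\alpha_k\<x_k-u,B(x_k)\>-2\alpha_k\beta_k\<x_k-u,C(x_k)\>+\alpha_k^2\|B(x_k)+\beta_kC(x_k)\|^2$.

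Next I would decompose the coupling terms. Writing $u-\psi_{i,k}=(u-x_k)+(x_k-\psi_{i,k})$ with $x_k-\psi_{i,k}=\alpha_kB(x_k)+\alpha_k\beta_kC(x_k)+\sum_{j=1}^i(\psi_{j-1,k}-\psi_{j,k})$, the aggregate $\sum_{i=1}^m\<v_i,u-x_k\>$ collapses through $\sum_{i=1}^mv_i=w-B(u)-p$ into $\<w-B(u)-p,u-x_k\>$. The $w$-part is exactly the target $2\alpha_k\<u-x_k,w\>$; the $B(u)$-part merges with the $B(x_k)$ inner product from the second bracket to form $-2\alpha_k\<x_k-u,B(x_k)-B(u)\>$, which cocoercivity of $B$ (i.e.\ $\<x_k-u,B(x_k)-B(u)\>\ge\mu\|B(x_k)-B(u)\|^2$) bounds by $-2\mu\alpha_k\|B(x_k)-B(u)\|^2$; combined with $\|B(x_k)+\beta_kC(x_k)\|^2\le2\|B(x_k)\|^2+2\beta_k^2\|C(x_k)\|^2$ and $\|B(x_k)\|^2\le2\|B(x_k)-B(u)\|^2+2\|B(u)\|^2$, this yields the two $B$-terms on the right. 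The normal-cone part $-2\alpha_k\<p,u-x_k\>=2\alpha_k\<p,x_k-u\>$ is deferred.

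The technical core is the treatment of the leftover cross terms $2\alpha_k^2\<v_i,B(x_k)\>$, $2\alpha_k^2\beta_k\<v_i,C(x_k)\>$ and $2\alpha_k\<v_i,\psi_{j-1,k}-\psi_{j,k}\>$ for $j\le i$. I would dispatch each by Young's inequality $2\<a,b\>\le t\|a\|^2+t^{-1}\|b\|^2$ with $t$ chosen proportional to $\tfrac{\ep}{1+\ep}$ and inversely to $m$, so that on summation the $\|B(x_k)\|^2$, $\beta_k^2\|C(x_k)\|^2$ and $\|\psi_{j,k}-\psi_{j-1,k}\|^2$ contributions lift the respective coefficients to $(2+\tfrac{\ep}{1+\ep})$ and drop the increment coefficient to $(1-\tfrac{\ep}{1+\ep})$, while the conjugate halves accumulate into $\tfrac{2(m(m+1)+1)(1+\ep)}{\ep}\alpha_k^2\sum_{i=1}^m\|v_i\|^2$ — the constant $m(m+1)+1$ reflecting the $\tfrac{m(m+1)}2$ index pairs $(i,j)$ together with the $B$- and $C$-cross families. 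I expect this balancing to be the main obstacle, since the single parameter $\ep$ must be threaded consistently through every estimate so that exactly the asserted coefficients survive on the left.

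It remains to absorb $2\alpha_k\<p,x_k-u\>$ against the penalty term $-2\alpha_k\beta_k\<x_k-u,C(x_k)\>$. Setting $q:=\tfrac{2(1+\ep)}{\ep\beta_k}p$, the deferred term equals $\tfrac{\ep}{1+\ep}\alpha_k\beta_k\<q,x_k-u\>$. Applying the Fitzpatrick inequality to the pair $(x_k,C(x_k))\in\Gr(C)$ gives $\<x_k,q\>+\<u,C(x_k)\>-\<x_k,C(x_k)\>\le\varphi_C(u,q)$; since $p\in N_{\zer(C)}(u)$ forces $u\in\zer(C)$ (hence $C(u)=0$) and $\<q,u\>=\sigma_{\zer(C)}(q)$, while $u\in\zer(C)$ gives $\varphi_C(u,q)\le\sup_{u'\in\zer(C)}\varphi_C(u',q)$, I would obtain $\<q,x_k-u\>-\<x_k-u,C(x_k)\>\le\sup_{u'\in\zer(C)}\varphi_C(u',q)-\sigma_{\zer(C)}(q)$, i.e.\ $2\alpha_k\<p,x_k-u\>\le\tfrac{\ep}{1+\ep}\alpha_k\beta_k\<x_k-u,C(x_k)\>+\tfrac{\ep}{1+\ep}\alpha_k\beta_k\bigl[\sup_{u'\in\zer(C)}\varphi_C(u',q)-\sigma_{\zer(C)}(q)\bigr]$. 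Combining with the $-2\alpha_k\beta_k\<x_k-u,C(x_k)\>$ from the second bracket leaves $(-2+\tfrac{\ep}{1+\ep})\alpha_k\beta_k\<x_k-u,C(x_k)\>$; retaining a $\tfrac{\ep}{1+\ep}$-share on the left and bounding the remaining $-2(1-\tfrac{\ep}{1+\ep})$-share by cocoercivity of $C$ (via $\<x_k-u,C(x_k)\>\ge\eta\|C(x_k)\|^2$) produces the asserted $\|C(x_k)\|^2$ coefficient. Gathering all the negative-definite quantities on the left then yields (\ref{IFBP-lemma1-eqn}).
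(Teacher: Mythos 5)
Your proposal is correct and follows essentially the same route as the paper: peel off the resolvent steps via monotonicity of each $A_i$, expand the forward step, balance all cross terms with $\ep$-weighted Young inequalities, and absorb the normal-cone vector $p$ through the Fitzpatrick inequality applied to $(x_k,C(x_k))\in\Gr(C)$ with the rescaled argument $\frac{2(1+\ep)}{\ep\beta_k}p$. Two small remarks. First, you apply cocoercivity with $\mu$ attached to $B$ and $\eta$ to $C$, as Assumption \ref{coco-assump} literally states; the inequality (\ref{IFBP-lemma1-eqn}) (and the paper's own proof) instead uses $\mu$ as the cocoercivity constant of $C$ and $\eta$ as that of $B$, so your derivation reproduces the estimate with $\mu$ and $\eta$ interchanged --- this is an inconsistency in the paper's labeling rather than an error in your argument, but it should be fixed to match the stated coefficients $\frac{2\mu}{1+\ep}$ and $-2\eta$. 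Second, your handling of the cross terms (expanding $x_k-\psi_{i,k}$ into the $B$-, $C$- and increment pieces and distributing Young's inequality over the index pairs) differs organizationally from the paper, which instead bounds $\sum_{i}\|x_k-\psi_{i-1,k}\|^2\le m(m+1)\bigl(\|x_k-\psi_{0,k}\|^2+\sum_i\|\psi_{i,k}-\psi_{i-1,k}\|^2\bigr)$ by the triangle and Cauchy--Schwarz inequalities before applying Young's inequality once; you assert rather than verify that your choice of parameters lands exactly on the constant $\frac{2(m(m+1)+1)(1+\ep)}{\ep}$, and that bookkeeping is the one step that still needs to be written out.
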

\begin{proof}
	For all $i=1,\dots,m$ and $k\geq 1$, we know that
	$\psi_{i-1,k}-\psi_{i,k}\in \alpha_kA_i(\psi_{i,k})$.
	Since $v_i\in A_i(u)$, it follows from the monotonicity of $A_i$ that
	\begin{eqnarray}\label{lemma1-proof-1}
	\langle \psi_{i-1,k}-\psi_{i,k}-\alpha_kv_i,\psi_{i,k}-u\rangle\geq0.
	\end{eqnarray}
	Note that for all $i=1,\dots,m$ and $k\geq 1$, we have 
	\begin{eqnarray}\label{lemma-1-proof-1-1}\hspace{-0.3cm}\|\psi_{i,k}-u\|^2-\|\psi_{i-1,k}-u\|^2+\|\psi_{i,k}-\psi_{i-1,k}\|^2=2\langle \psi_{i,k}-\psi_{i-1,k},\psi_{i,k}-u\rangle,
	\end{eqnarray}
	which, together with (\ref{lemma1-proof-1}), imply that
	\begin{eqnarray}\label{lemma1-proof-2}
	\|\psi_{i,k}-u\|^2-\|\psi_{i-1,k}-u\|^2+\|\psi_{i,k}-\psi_{i-1,k}\|^2\leq2\alpha_k\langle v_i,u-\psi_{i,k}\rangle.
	\end{eqnarray}
	Summing up inequalities (\ref{lemma1-proof-2}) for all $i=1,\ldots,m$, we obtain that for every $k\geq1$
	\begin{eqnarray}\label{lemma1-proof-3}
	\hspace{-0.3cm}\|x_{k+1}-u\|^2-\|\psi_{0,k}-u\|^2+\sum_{i=1}^m\|\psi_{i,k}-\psi_{i-1,k}\|^2\leq2\alpha_k\sum_{i=1}^m\langle v_i,u-\psi_{i,k}\rangle.
	\end{eqnarray}
	Note that 
	\begin{eqnarray*}
		\|\psi_{0,k}-u\|^2&=&\|x_k-u\|^2+\alpha_k^2\|B(x_k)+\beta_kC(x_k)\|^2\\
		&&-2\alpha_k\<x_k-u,B(x_k)+\beta_kC(x_k)\>\\
		&\leq&\|x_k-u\|^2+2\alpha_k^2\|B(x_k)\|^2+2\alpha_k^2\beta_k^2\|C(x_k)\|^2\\
		&&-2\alpha_k\<x_k-u,B(x_k)+\beta_kC(x_k)\>	\\
		&\leq&\|x_k-u\|^2+4\alpha_k^2\|B(x_k)-B(u)\|^2+4\alpha_k^2\|B(u)\|^2\\
		&&+2\alpha_k^2\beta_k^2\|C(x_k)\|^2-2\alpha_k\<x_k-u,B(x_k)+\beta_kC(x_k)\>.	
	\end{eqnarray*}	
	It then follows from the inequality (\ref{lemma1-proof-3}) that
	\begin{eqnarray}\label{lemma1-proof-4}
	\|x_{k+1}-u\|^2&-&\|x_k-u\|^2+\sum_{i=1}^m\|\psi_{i,k}-\psi_{i-1,k}\|^2\nonumber\\
	&\leq&4\alpha_k^2\|B(x_k)-B(u)\|^2+4\alpha_k^2\|B(u)\|^2+2\alpha_k^2\beta_k^2\|C(x_k)\|^2\nonumber\\
	&&-2\alpha_k\<x_k-u,B(x_k)+\beta_kC(x_k)\>+2\alpha_k\sum_{i=1}^m\langle v_i,u-\psi_{i,k}\rangle\nonumber\\
	&=&4\alpha_k^2\|B(x_k)-B(u)\|^2+4\alpha_k^2\|B(u)\|^2+2\alpha_k^2\beta_k^2\|C(x_k)\|^2\nonumber\\
	&&+2\alpha_k\beta_k\<u-x_k,C(x_k)\>+2\alpha_k\<u-x_k,B(x_k)+\sum_{i=1}^mv_i\>\nonumber\\	
	&&+2\alpha_k\sum_{i=1}^m\langle v_i,x_k-\psi_{i,k}\rangle,\indent\forall k\geq1.
	\end{eqnarray}
	Since $C$ is $\mu$-cocoercive and $C(u)=0$, we have
	$$2\alpha_k\beta_k\<u-x_k,C(x_k)\>\leq-2\mu\alpha_k\beta_k\|C(x_k)\|^2,\indent\forall k\geq1,$$
	which implies that
	\begin{eqnarray}\label{lemma1-proof-5}
	2\alpha_k\beta_k\<u-x_k,C(x_k)\>&\leq&-\frac{2\mu}{1+\ep}\alpha_k\beta_k\|C(x_k)\|^2\nonumber\\
	&&-\frac{2\ep}{1+\ep}\alpha_k\beta_k\<x_k-u,C(x_k)\>, \indent\forall k\geq1.
	\end{eqnarray}
	
	The $\eta$-cocoercivity of $B$ also yields for all $k\geq1$
	\begin{eqnarray}\label{lemma1-proof-6}
	2\alpha_k\<u-x_k,B(x_k)+\sum_{i=1}^mv_i\>
	&=&2\alpha_k\<u-x_k,B(x_k)-B(u)\>\nonumber\\
	&&+2\alpha_k\<u-x_k,B(u)+\sum_{i=1}^mv_i\>\nonumber\\
	&\leq&-2\eta\alpha_k\|B(x_k)-B(u)\|^2\nonumber\\
	&&+2\alpha_k\<u-x_k,B(u)+\sum_{i=1}^mv_i\>.
	\end{eqnarray}

	Let us consider the last term of (\ref{lemma1-proof-4}). Note that 
	\begin{eqnarray*}
		0&\leq&\frac{1}{m(m+1)}\left(\frac{2(1+\varepsilon)}{\varepsilon}\right)\left\|\frac{\varepsilon}{2(1+\varepsilon)}(x_k-\psi_{i-1,k})-m(m+1)\alpha_kv_i\right\|^2\\
		&=&\frac{\varepsilon}{2m(m+1)(1+\varepsilon)}\|x_k-\psi_{i-1,k}\|^2+\frac{2m(m+1)(1+\varepsilon)}{\varepsilon}\alpha_k^2\|v_i\|^2\\
		&&-2\alpha_k\langle x_k-\psi_{i-1,k},v_i\rangle.
	\end{eqnarray*}
	Summing up this inequality for all $i=1,2,\ldots,m$ gives
	\begin{eqnarray}\label{lemma1-proof-7}
	\hspace{-0.5cm}	2\alpha_k\sum_{i=1}^m\langle x_k-\psi_{i-1,k},v_i\rangle
	&\leq& \frac{\varepsilon}{2m(m+1)(1+\varepsilon)}\sum_{i=1}^m\|x_k-\psi_{i-1,k}\|^2\nonumber\\
	&&+\frac{2m(m+1)(1+\varepsilon)}{\varepsilon}\alpha_k^2\sum_{i=1}^m\|v_i\|^2,\indent\forall k\geq1.
	\end{eqnarray}
	For all $k\geq1$, we claim that
	\begin{eqnarray}\label{lemma1-proof-8}
	\hspace{-0.5cm}		\frac{\varepsilon}{2m(m+1)(1+\varepsilon)}\sum_{i=1}^m\|x_k-\psi_{i-1,k}\|^2&\leq& \frac{\varepsilon}{2(1+\varepsilon)}\|x_k-\psi_{0,k}\|^2\nonumber\\
	&&+\frac{\varepsilon}{2(1+\varepsilon)}\sum_{i=1}^m\|\psi_{i,k}-\psi_{i-1,k}\|^2.
	\end{eqnarray}
	In fact, for all $i=1,\ldots,m$ and $k\geq1$, we have
	\begin{eqnarray*}
		\|x_k-\psi_{i-1,k}\|&\leq& \|x_k-\psi_{0,k}\|+\sum_{j=1}^{i-1}\|\psi_{j,k}-\psi_{j-1,k}\|\\
		&\leq&\|x_k-\psi_{0,k}\|+\sum_{i=1}^m\|\psi_{i,k}-\psi_{i-1,k}\|.
	\end{eqnarray*}
	By using the triangle inequality, we have 
	\begin{eqnarray*}
		\|x_k-\psi_{i-1,k}\|^2
		&\leq&\left(\|x_k-\psi_{0,k}\|+\sum_{i=1}^m\|\psi_{i,k}-\psi_{i-1,k}\|\right)^2\\
		&\leq&(m+1)\left(\|x_k-\psi_{0,k}\|^2+\sum_{i=1}^m\|\psi_{i,k}-\psi_{i-1,k}\|^2\right),
	\end{eqnarray*}
	and therefore,
	\begin{eqnarray*}
		\sum_{i=1}^m\|x_k-\psi_{i-1,k}\|^2
		&\leq&m(m+1)\left(\|x_k-\psi_{0,k}\|^2+\sum_{i=1}^m\|\psi_{i,k}-\psi_{i-1,k}\|^2\right).
	\end{eqnarray*}
	Multiplying both sides of this inequality by 	$\frac{\varepsilon}{2m(m+1)(1+\varepsilon) }$, we obtain  (\ref{lemma1-proof-8}) as desired.
	
	Observe that for all $k\geq1$, we have
	\begin{eqnarray*}\|x_k-\psi_{0,k}\|^2&=&\|\alpha_kB(x_k)-\alpha_k\beta_kC(x_k)\|^2\\
		&\leq&2\alpha_k^2\|B(x_k)\|^2+2\alpha_k^2\beta_k^2\|C(x_k)\|^2\\
		&\leq&4\alpha_k^2\|B(x_k)-B(u)\|^2+4\alpha_k^2\|B(u)\|^2+2\alpha_k^2\beta_k^2\|C(x_k)\|^2.
	\end{eqnarray*}
	This together with (\ref{lemma1-proof-8}) imply that (\ref{lemma1-proof-7}) becomes
	\begin{eqnarray}\label{lemma1-proof-9}
	2\alpha_k\sum_{i=1}^m\langle x_k-\psi_{i-1,k},v_i\rangle
	&\leq& \frac{2\varepsilon}{1+\varepsilon}\alpha_k^2\|B(x_k)-B(u)\|^2+\frac{2\varepsilon}{1+\varepsilon}\alpha_k^2\|B(u)\|^2\nonumber\\
	&&+\frac{\varepsilon}{1+\varepsilon}\alpha_k^2\beta_k^2\|C(x_k)\|^2+\frac{\varepsilon}{2(1+\varepsilon)}\sum_{i=1}^m\|\psi_{i,k}-\psi_{i-1,k}\|^2\nonumber\\
	&&+\frac{2m(m+1)(1+\varepsilon)}{\varepsilon}\alpha_k^2\sum_{i=1}^m\|v_i\|^2. 
	\end{eqnarray}
	Furthermore, we also note that for all $k\geq1$
	\begin{eqnarray*}
		0&\leq&\frac{2(1+\varepsilon)}{\varepsilon}\left\|\frac{\varepsilon}{2(1+\varepsilon)}(\psi_{i-1,k}-\psi_{i,k})-\alpha_kv_i\right\|^2\\
		&=&\frac{\varepsilon}{2(1+\varepsilon)}\|\psi_{i-1,k}-\psi_{i,k}\|^2+\frac{2(1+\varepsilon)\alpha_k^2}{\varepsilon}\|v_i\|^2-2\alpha_k\langle \psi_{i-1,k}-\psi_{i,k}, v_i\rangle,
	\end{eqnarray*}
	which is 
	\begin{eqnarray*}
		2\alpha_k\langle \psi_{i-1,k}-\psi_{i,k}, v_i\rangle
		\leq \frac{\varepsilon}{2(1+\varepsilon)}\|\psi_{i-1,k}-\psi_{i,k}\|^2+\frac{2(1+\varepsilon)\alpha_k^2}{\varepsilon}\|v_i\|^2,
	\end{eqnarray*}
	and so
	\begin{eqnarray}\label{lemma1-proof-10}
	2\alpha_k\sum_{i=1}^m\langle \psi_{i-1,k}-\psi_{i,k}, v_i\rangle
	&\leq& \frac{\varepsilon}{2(1+\varepsilon)}\sum_{i=1}^m\|\psi_{i,k}-\psi_{i-1,k}\|^2\nonumber\\
	&&+\frac{2(1+\varepsilon)}{\varepsilon}\alpha_k^2\sum_{i=1}^m\|v_i\|^2.
	\end{eqnarray}
	By using (\ref{lemma1-proof-9}) and (\ref{lemma1-proof-10}), we have
	\begin{eqnarray}\label{lemma1-proof-10-2}
	2\alpha_k\sum_{i=1}^m\langle v_i, x_k-\psi_{i,k}\rangle 
	&=& 2\alpha_k\sum_{i=1}^m\langle v_i, x_k-\psi_{i-1,k}\rangle+2\alpha_k\sum_{i=1}^m\langle v_i, \psi_{i-1,k}-\psi_{i,k}\rangle\nonumber\\
	&\leq& \frac{2\varepsilon}{1+\varepsilon}\alpha_k^2\|B(x_k)-B(u)\|^2+\frac{2\varepsilon}{1+\varepsilon}\alpha_k^2\|B(u)\|^2\nonumber\\
	&&+\frac{\varepsilon}{1+\varepsilon}\alpha_k^2\beta_k^2\|C(x_k)\|^2+\frac{\varepsilon}{(1+\varepsilon)}\sum_{i=1}^m\|\psi_{i,k}-\psi_{i-1,k}\|^2\nonumber\\
	&&+\frac{2(m(m+1)+1)(1+\varepsilon)}{\varepsilon}\alpha_k^2\sum_{i=1}^m\|v_i\|^2.
	\end{eqnarray}
	Combining (\ref{lemma1-proof-4}), (\ref{lemma1-proof-5}), (\ref{lemma1-proof-6}), and (\ref{lemma1-proof-10-2}) yields 
	\begin{eqnarray}\label{lemma1-proof-11}
	\|x_{k+1}-u\|^2&-&\|x_k-u\|^2+\left(1-\frac{\ep}{1+\ep}\right)\sum_{i=1}^m\|\psi_{i,k}-\psi_{i-1,k}\|^2\nonumber\\
	&+&\left(\frac{2\mu}{1+\ep}-\left(2+\frac{\ep}{1+\ep}\right)\alpha_k\beta_k\right)\alpha_k\beta_k\|C(x_k)\|^2\nonumber\\
	&+&\frac{\ep}{1+\ep}\alpha_k\beta_k\<x_k-u,C(x_k)\>\nonumber\\
	&\leq&\left(\left(4+\frac{2\ep}{1+\ep}\right)\alpha_k-2\eta\right)\alpha_k\|B(x_k)-B(u)\|^2\nonumber\\
	&&+\left(4+\frac{2\ep}{1+\ep}\right)\alpha_k^2\|B(u)\|^2+\frac{2(m(m+1)+1)(1+\ep)}{\ep}\alpha_k^2\sum_{i=1}^m\|v_i\|^2\nonumber\\
	&&+2\alpha_k\<u-x_k,B(u)+\sum_{i=1}^mv_i\>+\frac{\ep}{1+\ep}\alpha_k\beta_k\<x_k-u,C(x_k)\>.
	\end{eqnarray}

	Finally, using the definition of the Fitzpatrick function and the fact that $\sigma_{\zer(C)}\left(\frac{2p}{\frac{\ep}{1+\ep}\beta_k}\right)=\<u,\frac{2p}{\frac{\ep}{1+\ep}\beta_k}\>$ for all $k\geq1$, we have

	\begin{eqnarray*}&&2\alpha_k\<u-x_k,B(u)+\sum_{i=1}^mv_i\>+\frac{\ep}{1+\ep}\alpha_k\beta_k\<u,C(x_k)\>-\frac{\ep}{1+\ep}\alpha_k\beta_k\<x_k,C(x_k)\>	\\
		&=&	2\alpha_k\<u-x_k,-p\>+2\alpha_k\<u-x_k,w\>+\frac{\ep}{1+\ep}\alpha_k\beta_k\<u,C(x_k)\>\\
		&&-\frac{\ep}{1+\ep}\alpha_k\beta_k\<x_k,C(x_k)\>\\
		&=&\frac{\ep}{1+\ep}\alpha_k\beta_k\left[\<x_k,\frac{2p}{\frac{\ep}{1+\ep}\beta_k}\>+\<u,C(x_k)\>-\<x_k,C(x_k)\>-\<u,\frac{2p}{\frac{\ep}{1+\ep}\beta_k}\>\right]\\
		&&+2\alpha_k\<u-x_k,w\>\\
		&\leq&\frac{\ep}{1+\ep}\alpha_k\beta_k\left[\sup_{u\in\zer(C)}\varphi_{C}\left(u,\frac{2p}{\frac{\ep}{1+\ep}\beta_k}\right)-\<u,\frac{2p}{\frac{\ep}{1+\ep}\beta_k}\>\right]\\
		&&+2\alpha_k\<u-x_k,w\>\\
		&=&\frac{\ep}{1+\ep}\alpha_k\beta_k\left[\sup_{u\in\zer(C)}\varphi_{C}\left(u,\frac{2p}{\frac{\ep}{1+\ep}\beta_k}\right)-\sigma_{\zer(C)}\left(\frac{2p}{\frac{\ep}{1+\ep}\beta_k}\right)\right]\\
		&&+2\alpha_k\<u-x_k,w\>,
	\end{eqnarray*}
	which together with (\ref{lemma1-proof-11}) imply the required inequality.
\end{proof}

For the convergence results of this section, the following assumption is required.
\begin{assumption}\label{assumption-IFB-P1}The following statements hold:
	\begin{itemize}
		\item[(H1)] The qualification condition
		$$\zer(C)\cap\bigcap_{i=1}^m\mathrm{int}(\mathrm{dom}(A_i))\neq\emptyset$$
		holds.
		\item[(H2)] For every $p\in \ran(N_{\zer(C)})$, we have $$\sum_{k=1}^\infty\alpha_k\beta_k\left[\sup_{u\in\zer(C)}\varphi_{C}\left(u,\frac{p}{\beta_k}\right)-\sigma_{\zer(C)}\left(\frac{p}{\beta_k}\right)\right]<+\infty.$$	
		\item[(H3)]  The sequences $\{\alpha_k\}_{k=1}^{\infty}$ and $\{\beta_k\}_{k=1}^{\infty}$ satisfy
		$$0<\liminf_{k\to+\infty}\alpha_k\beta_k\leq \limsup_{k\to+\infty}\alpha_k\beta_k <\mu,$$
	where $\mu$ is the cocoercive parameter of the operator $B$.
		\item[(H4)] The sequence $\{\alpha_k\}_{k=1}^{\infty}\in\ell^2\setminus \ell^1$.
	\end{itemize}
\end{assumption}

\begin{remark}Some remarks concerning Assumption \ref{assumption-IFB-P1} are as follows.
	
	(i) Condition (H1) implies that $\sum_{i=1}^mA_i+B+N_{\zer(C)}$ is a maximally monotone operator.
	In fact, since $\bigcap_{i=1}^m\mathrm{int}(\mathrm{dom}(A_i))\neq\emptyset$, the sum $\sum_{i=1}^mA_i$ is maximally monotone (see \cite[Corollary 24.4]{BC11}). The maximal monotonicity of $N_{\zer(C)}$ (see \cite[Proposition 23.39]{BC11}) and the fact that $\bigcap_{i=1}^m\mathrm{int}(\mathrm{dom}(A_i))\subset \mathrm{int}(\mathrm{dom}(\sum_{i=1}^mA_i))$ give us the maximal monotonicity of $\sum_{i=1}^mA_i+N_{\zer(C)}$. Moreover, since $B$ is maximally monotone (see \cite[Example 20.28]{BC11}),  condition (H1) guarantees that $\sum_{i=1}^mA_i+B+N_{\zer(C)}$ is also maximally monotone (see \cite[Corollary 24.4]{BC11}).
	
	(ii) Hypothesis (H2) has  been introduced by Bo\c{t} and Csetnek \cite{BC14-penalty} in order to show the convergence of the proposed iterative scheme (cf. \cite[Hypothesis ($H_{f itz}$)]{BC14-penalty}). They also pointed out that for every $p \in \ran(N_{\zer(C)})$ and any $k\geq1$, one has
	$$\sup_{u\in\zer(C)}\varphi_{C}\left(u,\frac{p}{\beta_k}\right)-\sigma_{\zer(C)}\left(\frac{p}{\beta_k}\right)\geq0.$$
	Some instances of the operator $C$ satisfying  hypothesis (H2) can be found in \cite[Section 5]{BB15}. 
	
	(iii) An example of the sequences $\{\alpha_k\}_{k=1}^{\infty}$ and $\{\beta_k\}_{k=1}^{\infty}$  satisfying conditions (H3) and (H4) is that $\alpha_k=1/k$ and $\beta_k= \xi k$ for all $k\geq1$, where $0<\xi<\mu$.
\end{remark}

 The following theorem is the convergence result for GFBP.
\begin{theorem}\label{theorem-IFB-P1}Let $\{x_k\}_{k=1}^\infty$ be a sequence generated by GFBP and let $\{z_k\}_{k=1}^\infty$ be a sequence of weighted averages as (\ref{ergodic-eqn}). If Assumptions \ref{coco-assump} and \ref{assumption-IFB-P1} hold, then the following statements are true:
	\begin{itemize}
		\item[(i)] For all $u\in\zer(\sum_{i=1}^mA_i+B+N_{\zer(C)})$ we have $\lim_{k\to+\infty}\|x_k-u\|$ exists, and the series $\sum_{k=1}^\infty\sum_{i=1}^m\|\psi_{i,k}-\psi_{i-1,k}\|^2$,  $\sum_{k=1}^\infty\alpha_k\beta_k\|C(x_k)\|^2$, and
		 $\sum_{k=1}^\infty\alpha_k\beta_k\<x_k-u,C(x_k)\>$ are convergent. 
		\item[(ii)] It hols that \begin{eqnarray*}
		\lim_{k\to+\infty}\sum_{i=1}^m\|\psi_{i,k}-\psi_{i-1,k}\|^2&=&\lim_{k\to+\infty}\<x_k-u,C(x_k)\>
		=\lim_{k\to+\infty}\|C(x_k)\|=0.
		\end{eqnarray*}
		\item[(iii)] The sequence $\{z_k\}_{k=1}^\infty$ converges weakly to an element in $\zer(\sum_{i=1}^mA_i+B+N_{\zer(C)})$.
	\end{itemize}
\end{theorem}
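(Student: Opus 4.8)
The plan is to derive everything from the master inequality (\ref{IFBP-lemma1-eqn}) of Lemma \ref{IFBP-lemma1}, feeding it into the summability lemma (Lemma \ref{lemma-acp10}) for parts (i)--(ii) and into the Opial--Passty lemma (Lemma \ref{lemma-OP}) for part (iii). The first move is to \emph{fix} one value $\ep>0$ small enough that the coefficient $\frac{2\mu}{1+\ep}-(2+\frac{\ep}{1+\ep})\alpha_k\beta_k$ multiplying $\alpha_k\beta_k\|C(x_k)\|^2$ on the left of (\ref{IFBP-lemma1-eqn}) is nonnegative for all large $k$: as $\ep\downarrow 0$ the threshold $\frac{2\mu}{(1+\ep)(2+\frac{\ep}{1+\ep})}$ tends to $\mu$, while (H3) forces $\limsup_k\alpha_k\beta_k<\mu$. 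I also record that any $u\in\zer(\sum_{i=1}^mA_i+B+N_{\zer(C)})$ satisfies $u\in\zer(C)$ (its normal cone must be nonempty), so $C(u)=0$ and the term $\frac{\ep}{1+\ep}\alpha_k\beta_k\langle x_k-u,C(x_k)\rangle$ is nonnegative by cocoercivity.

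For part (i) I would apply Lemma \ref{IFBP-lemma1} with the distinguished choice $w=0$, legitimate because $0\in(\sum_{i=1}^mA_i+B+N_{\zer(C)})(u)$. Setting $a_k:=\|x_k-u\|^2$ (bounded below), the nonnegative left-hand terms, together with the quantity $(2\eta-(4+\frac{2\ep}{1+\ep})\alpha_k)\alpha_k\|B(x_k)-B(u)\|^2$, which is $\geq 0$ once $\alpha_k$ is small (and $\alpha_k\to 0$ since $\{\alpha_k\}\in\ell^2$), play the role of $b_k$; the remaining right-hand terms form $c_k$. These $c_k$ are summable: $\sum_k\alpha_k^2\|B(u)\|^2$ and $\sum_k\alpha_k^2\sum_i\|v_i\|^2$ converge because $\{\alpha_k\}\in\ell^2$, and the Fitzpatrick bracket is summable by (H2) after rewriting its argument $\frac{2p}{\frac{\ep}{1+\ep}\beta_k}$ as $q/\beta_k$ with $q:=\frac{2(1+\ep)}{\ep}p\in\ran(N_{\zer(C)})$, the range being a cone. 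Lemma \ref{lemma-acp10} then gives existence of $\lim_k\|x_k-u\|^2$ and convergence of $\sum_k b_k$, which contains the three series in (i). Part (ii) is immediate: each summand tends to $0$, so $\sum_i\|\psi_{i,k}-\psi_{i-1,k}\|^2\to 0$, and since (H3) yields $\liminf_k\alpha_k\beta_k>0$, dividing out $\alpha_k\beta_k$ gives $\langle x_k-u,C(x_k)\rangle\to 0$ and $\|C(x_k)\|\to 0$.

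For part (iii) I would verify the two hypotheses of Lemma \ref{lemma-OP} for $\{z_k\}$ with the target set $S:=\zer(\sum_{i=1}^mA_i+B+N_{\zer(C)})$. Hypothesis (i) there is precisely the existence of $\lim_k\|x_k-u\|$ from part (i). For hypothesis (ii), let $\bar z$ be a weak cluster point, $z_{k_j}\rightharpoonup\bar z$, and take an arbitrary $(u,w)\in\Gr(\sum_{i=1}^mA_i+B+N_{\zer(C)})$ with $w=\sum_iv_i+B(u)+p$. Applying (\ref{IFBP-lemma1-eqn}) with this general $w$, discarding the nonnegative left-hand terms and the eventually nonpositive $B$-term on the right, and telescoping yields the uniform bound
\begin{eqnarray*}
2\sum_{k=1}^N\alpha_k\langle x_k-u,w\rangle\leq M<+\infty\qquad\text{for all }N\geq 1,
\end{eqnarray*}
where $M$ collects $\|x_1-u\|^2$, the convergent sums $\sum_k\alpha_k^2\|B(u)\|^2$ and $\sum_k\alpha_k^2\sum_i\|v_i\|^2$, the summable Fitzpatrick brackets, and the finitely many positive $B$-contributions. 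Dividing by $2\tau_N$ and using $z_N-u=\frac{1}{\tau_N}\sum_{k=1}^N\alpha_k(x_k-u)$ from (\ref{ergodic-eqn}) gives $\langle z_N-u,w\rangle\leq \frac{M}{2\tau_N}$. Since $\{\alpha_k\}\notin\ell^1$ forces $\tau_N\to+\infty$, passing to the limit along $k_j$ produces $\langle u-\bar z,w\rangle\geq 0$. As $(u,w)$ ranges over the whole graph and $\sum_{i=1}^mA_i+B+N_{\zer(C)}$ is maximally monotone by (H1) (see the remark following Assumption \ref{assumption-IFB-P1}), the characterization (\ref{zeromaximal}) gives $\bar z\in S$, and Lemma \ref{lemma-OP} delivers the weak convergence of $\{z_k\}$ to a point of $S$.

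The main obstacle is the weak-cluster-point step in part (iii): one must bound the telescoped sum $\sum_{k=1}^N\alpha_k\langle x_k-u,w\rangle$ uniformly in $N$ even though only the ergodic (Ces\`aro) average is genuinely controlled. This is exactly where the dual role of $\{\alpha_k\}\in\ell^2\setminus\ell^1$ is crucial---square-summability tames the quadratic remainders, while non-summability provides $\tau_N\to+\infty$---in concert with the Fitzpatrick condition (H2). The delicate bookkeeping is tracking the sign of the $\|B(x_k)-B(u)\|^2$ coefficient (negative only for large $k$) and rescaling $p$ to $q=\frac{2(1+\ep)}{\ep}p$ so that (H2) applies verbatim.
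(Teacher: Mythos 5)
Your proposal is correct and follows essentially the same route as the paper: the $w=0$ specialization of Lemma \ref{IFBP-lemma1} fed into Lemma \ref{lemma-acp10} for (i)--(ii), and the general-$w$ telescoping bound divided by $\tau_{k_j}$, combined with the maximal monotonicity characterization (\ref{zeromaximal}) and Lemma \ref{lemma-OP}, for (iii). Your explicit remarks that $\ran(N_{\zer(C)})$ is a cone (so (H2) applies to the rescaled argument $\frac{2(1+\ep)}{\ep}p$) and that the cluster-point argument should be run on $\{z_k\}$ rather than $\{x_k\}$ are both accurate and, if anything, tidy up details the paper leaves implicit.
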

\begin{proof} (i) 
	Let $u\in\zer(\sum_{i=1}^mA_i+B+N_{\zer(C)})$. Since $\limsup_{k\to+\infty}\alpha_k\beta_k <\mu$, there exists $k_0\geq 1$  such that $\alpha_k\beta_k <\mu$ for all $k\geq k_0$. Pick 
	$\ep_0\in\left(0,\frac{2(\mu-\limsup_{k\to+\infty}\alpha_k\beta_k)}{3\limsup_{k\to+\infty}\alpha_k\beta_k}\right).$ It follows that  
	$\alpha_k\beta_k<\frac{2\mu}{2+3\ep_0}$ for all $k\geq k_0$. This allows us to choose $M>0$ such that $\alpha_k\beta_k\leq M<\frac{2\mu}{2+3\ep_0}=\frac{2\mu}{(1+\ep_0)\left(2+\frac{\ep_0}{1+\ep_0}\right)}$ for all $k\geq k_0$.
	Furthermore, since $\ep_0>0$ and $\alpha_k\to 0$, there exists $k_1\geq1$ such that 
	$$\left(4+\frac{2\ep_0}{1+\ep_0}\right)\alpha_k-2\eta<0,\indent \forall k\geq k_1.$$ 
	Taking $w=0$ in Lemma \ref{IFBP-lemma1}, for every $k\geq \overline{k}:=\max\{k_0,k_1\}$, we obtain 
	\begin{eqnarray}\label{IFBP-thm1-proof-eqn1}
	\hspace{-0.4cm}	\|x_{k+1}-u\|^2&-&\|x_k-u\|^2+\left(1-\frac{\ep_0}{1+\ep_0}\right)\sum_{i=1}^m\|\psi_{i,k}-\psi_{i-1,k}\|^2
	\nonumber\\
&+&\left(\frac{2\mu}{1+\ep_0}-\left(2+\frac{\ep_0}{1+\ep_0}\right)M\right)\alpha_k\beta_k\|C(x_k)\|^2\nonumber\\
	&&+\frac{\ep_0}{1+\ep_0}\alpha_k\beta_k\<x_k-u,C(x_k)\>\nonumber\\
	&\leq&\frac{2(m(m+1)+1)(1+\ep_0)}{\ep_0}\alpha_k^2\sum_{i=1}^m\|v_i\|^2\nonumber\\
	&&+\frac{\ep_0}{1+\ep_0}\alpha_k\beta_k\left[\sup_{u\in\zer(C)}\varphi_{C}\left(u,\frac{2p}{\frac{\ep_0}{1+\ep_0}\beta_k}\right)-\sigma_{\zer(C)}\left(\frac{2p}{\frac{\ep_0}{1+\ep_0}\beta_k}\right)\right].\nonumber\\	
	\end{eqnarray}
	Since the right-hand side is summable and the term $\<x_k-u,C(x_k)\>$ is nonnegative for all $k\geq1$, the conclusion in (i) follows from Lemma \ref{lemma-acp10} and Assumption \ref{assumption-IFB-P1} ((H2) and (H4)).

	(ii) Note that $\lim_{k\to+\infty}\sum_{i=1}^m\|\psi_{i,k}-\psi_{i-1,k}\|^2=0$. Since $\liminf_{k\to+\infty}\alpha_k\beta_k>0$, we have $\lim_{k\to+\infty}\|C(x_k)\|=\lim_{k\to+\infty}\<x_k-u,C(x_k)\>=0$. 
	%Further, we let $z$ be a weak cluster point of $\{x_k\}_{k=1}^{\infty}$ and $\{x_{k_j}\}_{j=1}^{\infty}$ be a subsequence of $\{x_k\}_{k=1}^{\infty}$ so that $x_{k_j}\rightharpoonup z$. In order to show that $z\in\zer(C)$, we will show that  $\<u-z,C(u)\>\geq0$ fpr all $u\in H$. Since $\lim_{k\to+\infty}\<x_{k_j}-u,C(x_{k_j})\>=0$, the monotonicity of $C$ implies that $z\in\zer(C)$.

	(iii) Let $z$ be a weak cluster point of $\{x_k\}_{k=1}^{\infty}$ and $\{x_{k_j}\}_{j=1}^{\infty}$ be a subsequence of $\{x_k\}_{k=1}^{\infty}$ such that $x_{k_j}\rightharpoonup z$. Since $\sum_{i=1}^mA_i+B+N_{\zer(C)}$ is a maximally monotone operator, in order to show that $z\in\zer\left(\sum_{i=1}^mA_i+B+N_{\zer(C)}\right)$, we will show that $\<u-z,w\>\geq0$ for all $(u,w)\in\Gr\left(\sum_{i=1}^mA_i+B+N_{\zer(C)}\right)$. 
	Now let $(u,w)\in\Gr\left(\sum_{i=1}^mA_i+B+N_{\zer(C)}\right)$ be such that $w=\sum_{i=1}^mv_i+B(u)+p$, where $v_i\in A_i(u)$ for all $i=1,\ldots,m$ and $p\in N_{\zer(C)}(u)$. From Lemma \ref{IFBP-lemma1}, for every $k\geq \overline{k}$, we have 
	\begin{eqnarray*}
		\|x_{k+1}-u\|^2-\|x_k-u\|^2
		&\leq&2\alpha_k\<u-x_k,w\>+\frac{2(m(m+1)+1)(1+\ep_0)}{\ep_0}\alpha_k^2\sum_{i=1}^m\|v_i\|^2\nonumber\\
		&&+\frac{\ep_0}{1+\ep_0}\alpha_k\beta_k\left[\sup_{u\in\zer(C)}\varphi_{C}\left(u,\frac{2p}{\frac{\ep_0}{1+\ep_0}\beta_k}\right)-\sigma_{\zer(C)}\left(\frac{2p}{\frac{\ep_0}{1+\ep_0}\beta_k}\right)\right].
	\end{eqnarray*}
	Summing up this inequality for all $k=\overline{k}+1,\ldots,k_j$, we obtain
	\begin{eqnarray*}
		\|x_{k_j}-u\|^2-\|x_{\overline{k}}-u\|^2&\leq&2\<\sum_{k=\overline{k}}^{k_j}\alpha_ku-\sum_{k=\overline{k}}^{k_j}\alpha_kx_k,w\>+L_1\\
		&=&2\<\sum_{k=1}^{k_j}\alpha_ku-\sum_{k=1}^{\overline{k}}\alpha_ku-\sum_{k=1}^{k_j}\alpha_kx_k+\sum_{k=1}^{\overline{k}}\alpha_ku,w\>+L_1,
	\end{eqnarray*}
	where \begin{eqnarray*}L_1&:=&\frac{2(m(m+1)+1)(1+\ep_0)}{\ep_0}\sum_{i=1}^m\|v_i\|^2\sum_{k=\overline{k}}^{k_j}\alpha_k^2\\
		&&+\frac{\ep_0}{1+\ep_0}\sum_{k=\overline{k}}^{k_j}\alpha_k\beta_k\left[\sup_{u\in\zer(C)}\varphi_{C}\left(u,\frac{2p}{\frac{\ep_0}{1+\ep_0}\beta_k}\right)-\sigma_{\zer(C)}\left(\frac{2p}{\frac{\ep_0}{1+\ep_0}\beta_k}\right)\right].
	\end{eqnarray*}
	Discarding the nonnegative term $\|x_{k_j+1}-u\|^2$ and dividing by $2\tau_{k_j}$, we deduce that
	\begin{eqnarray*}
		-\frac{\|x_{\overline{k}}-u\|^2}{2\tau_{k_j}}&\leq&\<u-z_{k_j},w\>+\frac{L_2}{2\tau_{k_j}},
	\end{eqnarray*}
	where $L_2:=L_1+2\<-\sum_{k=1}^{\overline{k}}\alpha_ku+\sum_{k=1}^{\overline{k}}\alpha_ku,w\>$, which is a finite real number. Hence, by passing the limit as $j\to+\infty$ (so that $\lim_{j\to+\infty}\tau_{k_j}=+\infty$), we have 
	$$\<u-z,w\>\geq0.$$
	Since $(u,w)\in\Gr\left(\sum_{i=1}^mA_i+B+N_{\zer(C)}\right)$ is arbitrary, we obtain that $$z\in\zer(\sum_{i=1}^mA_i+B+N_{\zer(C)}).$$ Thanks to Lemma \ref{lemma-OP}, we conclude that the sequence $\{z_k\}_{k=1}^\infty$ converges weakly to an element in $\zer(\sum_{i=1}^mA_i+B+N_{\zer(C)})$.
\end{proof}

 \begin{remark} In case $m=1$, Theorem \ref{theorem-IFB-P1} coincides with Theorem 13 in \cite{BC14-penalty}.   By taking $B(x)=C(x)=0$ for all $x\in \Hi$, Theorem \ref{theorem-IFB-P1} coincides with Theorem 3 of Passty\cite{P79}. 
 \end{remark}

	If only one of the operators $A_i, i=1\ldots,m$, is strongly monotone, then we can prove the strong convergence of the sequence $\{x_k\}_{k=1}^{\infty}$ to the unique zero of MIP as we illustrate in the following theorem. In this case, we assume without loss of generality that the $m^{\textrm{th}}$ operator is strongly monotone.

	\begin{theorem}\label{theorem-IFB-P1-strongly}
		Let $\{x_k\}_{k=1}^\infty$ be a sequence generated by GFBP. If Assumptions \ref{coco-assump} and \ref{assumption-IFB-P1} hold and the operator $A_m$ is $\gamma$-strongly monotone with $\gamma>0$, then the sequence $\{x_k\}_{k=1}^{\infty}$ converges strongly to the unique zero of the operator $\sum_{i=1}^{m}A_i+B+N_{\zer(C)}$. 
	\end{theorem}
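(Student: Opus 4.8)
The plan is to reduce everything to the estimate already established in Lemma~\ref{IFBP-lemma1}, which needs only one extra ingredient once $A_m$ is strongly monotone. First observe that, since $A_m$ is $\gamma$-strongly monotone and the remaining operators are monotone, the operator $\sum_{i=1}^m A_i + B + N_{\zer(C)}$ is $\gamma$-strongly monotone; being maximally monotone as well (by Assumption~\ref{assumption-IFB-P1}(H1), as noted in the remark following it), it possesses a unique zero, which I denote $u^*$. Thus the target point is well defined and it suffices to prove $x_k\to u^*$ strongly.

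The key step is to sharpen Lemma~\ref{IFBP-lemma1} at the index $i=m$. In its proof, inequality (\ref{lemma1-proof-1}) rests on the plain monotonicity of $A_i$; for $i=m$ I would replace it by the $\gamma$-strong monotonicity of $A_m$ applied to the pairs $(\psi_{m,k},\alpha_k^{-1}(\psi_{m-1,k}-\psi_{m,k}))$ and $(u,v_m)$ of $\Gr(A_m)$, which gives
$$\<\psi_{m-1,k}-\psi_{m,k}-\alpha_k v_m,\psi_{m,k}-u\>\geq \gamma\alpha_k\|\psi_{m,k}-u\|^2.$$
Carrying this through (\ref{lemma-1-proof-1-1})--(\ref{lemma1-proof-2}) for $i=m$ and summing as in (\ref{lemma1-proof-3}) produces an additional nonnegative summand $2\gamma\alpha_k\|\psi_{m,k}-u\|^2=2\gamma\alpha_k\|x_{k+1}-u\|^2$ on the left-hand side. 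Crucially, this term never interacts with the subsequent $\ep$-Young splittings (\ref{lemma1-proof-5})--(\ref{lemma1-proof-10-2}) or with the Fitzpatrick estimate, since those only reshape the right-hand side and the difference terms $\|\psi_{i,k}-\psi_{i-1,k}\|^2$; hence the conclusion (\ref{IFBP-lemma1-eqn}) of Lemma~\ref{IFBP-lemma1} holds verbatim with the extra term $2\gamma\alpha_k\|x_{k+1}-u\|^2$ adjoined to its left-hand side.

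Next I would run the argument of Theorem~\ref{theorem-IFB-P1}(i) with $u=u^*$ and $w=0$. Choosing $\ep_0$, $M$, and $\overline{k}$ exactly as there renders every coefficient on the left-hand side nonnegative for $k\geq\overline{k}$, so that inequality (\ref{IFBP-thm1-proof-eqn1}), augmented by $2\gamma\alpha_k\|x_{k+1}-u^*\|^2$, falls under Lemma~\ref{lemma-acp10}. This yields at once that $\lim_{k\to+\infty}\|x_k-u^*\|=:\ell$ exists and, in addition, that $\sum_{k=1}^{\infty}\alpha_k\|x_{k+1}-u^*\|^2<+\infty$.

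Finally I would conclude by a limit-inferior argument. Existence of $\ell$ means $\|x_{k+1}-u^*\|^2\to\ell^2$; if $\ell>0$, then $\alpha_k\|x_{k+1}-u^*\|^2\geq\tfrac{\ell^2}{2}\alpha_k$ for all large $k$, and summing contradicts $\sum_k\alpha_k\|x_{k+1}-u^*\|^2<+\infty$, since $\{\alpha_k\}\notin\ell^1$ by Assumption~\ref{assumption-IFB-P1}(H4). Hence $\ell=0$, that is, $x_k\to u^*$ strongly, where $u^*$ is the unique zero, as required. The main obstacle is precisely the bookkeeping of the second paragraph: one must verify that inserting the strong-monotonicity gain at $i=m$ leaves all of the remaining, rather intricate, estimates of Lemma~\ref{IFBP-lemma1} untouched, so that the extra summand survives cleanly as a nonnegative quantity on the left; everything after that is a short consequence of the tools already in hand.
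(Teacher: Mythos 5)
Your proposal is correct and follows essentially the same route as the paper: the paper likewise replaces plain monotonicity by the $\gamma$-strong monotonicity of $A_m$ at the last backward step to produce the extra summand $2\gamma\alpha_k\|x_{k+1}-u\|^2$, carries it verbatim through the estimates of Lemma~\ref{IFBP-lemma1} and Theorem~\ref{theorem-IFB-P1}(i), and then concludes from $\sum_k\alpha_k\|x_{k+1}-u\|^2<+\infty$, $\sum_k\alpha_k=+\infty$, and the existence of $\lim_k\|x_k-u\|$ that the limit is zero. Your closing limit-inferior argument is just a slightly more explicit rendering of the paper's final sentence.
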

	\begin{proof}Let $u$ be the unique element in $\zer\left(\sum_{i=1}^mA_i+B+N_{\zer(C)}\right)$. Then there exist $v_i\in A_i(u)$ for all $i=1,\ldots,m$ and $p\in N_{\zer(C)}(u)$ such that $0=\sum_{i=1}^mv_i+B(u)+p$. Since the arguments in the proof of Lemma \ref{IFBP-lemma1} (inequality (\ref{lemma1-proof-2})) hold for all $i=1,\ldots,m-1$, we have
		\begin{eqnarray}\label{IFBP-thm2-proof-1}
		\|\psi_{i,k}-u\|^2-\|\psi_{i-1,k}-u\|^2+\|\psi_{i,k}-\psi_{i-1,k}\|^2\leq2\alpha_k\<v_i,u-\psi_{i,k}\>
		\end{eqnarray}
		for all $i=1,\ldots,m-1$  and $k\geq1$.	Furthermore, the strong monotonicity of the operator $A_m$ gives
		$$\<\psi_{m-1,k}-\psi_{m,k}-\alpha_kv_m,\psi_{m,k}-u\>\geq\alpha_k\gamma\|\psi_{m,k}-u\|^2,$$
		and so
		\begin{eqnarray}\label{IFPB-thm2-proof-2}
	\hspace{-0.3cm}	\<\psi_{m-1,k}-\psi_{m,k},\psi_{m,k}-u\>\geq\alpha_k\gamma\|x_{k+1}-u\|^2+\alpha_k\<v_m,\psi_{m,k}-u\>,\forall k\geq1.
		\end{eqnarray}
		This together with (\ref{lemma-1-proof-1-1}) imply that for all $k\geq1$
		\begin{eqnarray}\label{IFBP-thm2-proof-3}
		\hspace{-0.4cm}	\|\psi_{m,k}-u\|^2-\|\psi_{m-1,k}-u\|^2+\|\psi_{m,k}-\psi_{m-1,k}\|^2&\leq&-2\alpha_k\gamma\|x_{k+1}-u\|^2\nonumber\\
		&&+2\alpha_k\<v_m,u-\psi_{m,k}\>.
		\end{eqnarray}
		Summing up  inequality (\ref{IFBP-thm2-proof-1}) for $i=1,\ldots,m-1$ and the inequality (\ref{IFBP-thm2-proof-3}), we obtain
		\begin{eqnarray}\label{IFBP-thm2-proof-4}
		\hspace{-0.3cm}		2\alpha_k\gamma\|x_{k+1}-u\|^2+\|x_{k+1}-u\|^2-\|\psi_{0,k}-u\|^2&+&\sum_{i=1}^m\|\psi_{i,k}-\psi_{i-1,k}\|^2\nonumber\\
		&&\leq2\alpha_k\sum_{i=1}^m\<v_i,u-\psi_{i,k}\>.
		\end{eqnarray}
		Following the lines of the proof of Lemma \ref{IFBP-lemma1} (with $w=0$), for all $k\geq1$ and $\ep>0$, we obtain 
		\begin{eqnarray*}
			2\alpha_k\gamma\|x_{k+1}-u\|^2&+&\|x_{k+1}-u\|^2-\|x_k-u\|^2+\left(1-\frac{\ep}{1+\ep}\right)\sum_{i=1}^m\|\psi_{i,k}-\psi_{i-1,k}\|^2
			\nonumber\\
			&+&\left(\frac{2\mu}{1+\ep}-\left(2+\frac{\ep}{1+\ep}\right)\alpha_k\beta_k\right)\alpha_k\beta_k\|C(x_k)\|^2\nonumber\\
			&+&\frac{\ep}{1+\ep}\alpha_k\beta_k\<x_k-u,C(x_k)\>\nonumber\\
			&\leq&\left(\left(4+\frac{2\ep}{1+\ep}\right)\alpha_k-2\eta\right)\alpha_k\|B(x_k)-B(u)\|^2\nonumber\\
			&&+\left(4+\frac{2\ep}{1+\ep}\right)\alpha_k^2\|B(u)\|^2+\frac{2(m(m+1)+1)(1+\ep)}{\ep}\alpha_k^2\sum_{i=1}^m\|v_i\|^2\nonumber\\
			&&+\frac{\ep}{1+\ep}\alpha_k\beta_k\left[\sup_{u\in\zer(C)}\varphi_{C}\left(u,\frac{2p}{\frac{\ep}{1+\ep}\beta_k}\right)-\sigma_{\zer(C)}\left(\frac{2p}{\frac{\ep}{1+\ep}\beta_k}\right)\right].
		\end{eqnarray*}
		Using the line of the proof of Theorem \ref{theorem-IFB-P1} (i),  for every $k\geq \overline{k}$, we have 
		\begin{eqnarray}\label{IFBP-thm2-proof-5}
		2\alpha_k\gamma\|x_{k+1}-u\|^2&+&\|x_{k+1}-u\|^2-\|x_k-u\|^2\nonumber\\
		&\leq&\frac{2(m(m+1)+1)(1+\ep_0)}{\ep_0}\alpha_k^2\sum_{i=1}^m\|v_i\|^2\nonumber\\
		&&+\frac{\ep_0}{1+\ep_0}\alpha_k\beta_k\left[\sup_{u\in\zer(C)}\varphi_{C}\left(u,\frac{2p}{\frac{\ep_0}{1+\ep_0}\beta_k}\right)-\sigma_{\zer(C)}\left(\frac{2p}{\frac{\ep_0}{1+\ep_0}\beta_k}\right)\right],\nonumber\\
		\end{eqnarray}
		and so
		\begin{eqnarray*}
			2\gamma\sum_{k=\overline{k}}^{+\infty}\alpha_k\|x_{k+1}-u\|^2
			&\leq&\|x_{\overline{k}}-u\|^2+\frac{2(m(m+1)+1)(1+\ep_0)}{\ep_0}\sum_{i=1}^m\|v_i\|^2\sum_{k=\overline{k}}^{+\infty}\alpha_k^2\nonumber\\
			&&+\frac{\ep_0}{1+\ep_0}\sum_{k=\overline{k}}^{+\infty}\alpha_k\beta_k\left[\sup_{u\in\zer(C)}\varphi_{C}\left(u,\frac{2p}{\frac{\ep_0}{1+\ep_0}\beta_k}\right)-\sigma_{\zer(C)}\left(\frac{2p}{\frac{\ep_0}{1+\ep_0}\beta_k}\right)\right].
		\end{eqnarray*}
		Since $\sum_{k=1}^{+\infty}\alpha_k=+\infty$ and $\lim_{k\to+\infty}\|x_k-u\|$ exists (by (\ref{IFBP-thm2-proof-5}) and Lemma \ref{lemma-acp10}), we conclude that $\lim_{k\to+\infty}\|x_k-u\|=0$.
	\end{proof}

	\section{Hierarchical Minimization Problem}
	
	In this section we show that the iterative scheme proposed in the previous section allows for solving of hierarchical minimization problem. The problem under investigation is of the form
	\begin{eqnarray}\label{opt-plus}%
	\begin{array}{ll}
	\textrm{minimize}\indent \sum_{i=1}^mf_i(x)+h(x)\\
	\textrm{subject to}\indent x\in\argmin g,
	\end{array}%
	\end{eqnarray}
	where, for all $i=1,\ldots,m$, $f_i:\Hi\to(-\infty,+\infty]$ is a proper convex lower semicontinuous objective function, $h:\Hi\to \R$ is a convex (Fr\'echet) differentiable objective function  and $\argmin g$ is the set of minima of a  convex (Fr\'echet) differentiable function $g:\Hi\to \R$ which we will assume that it is nonempty. 
	We assume that the gradient $\nabla h$ and $\nabla g$ are Lipschitz continuous operators with constants $L_h$ and $L_g$, respectively. Furthermore,  we may assume without loss of generality that $\min g=0$. We denote the solution set of the problem (\ref{opt-plus}) by $\mathcal{S}$ and assume that it is a nonempty set.

Since the functions $f_i:\Hi\to(-\infty,+\infty], i=1,\ldots,m$ are  proper convex lower semicontinuous, we know that the subdifferentials  $\partial f_i, i=1,\ldots,m$ are maximally monotone.  Moreover, since the functions $h$ and $g$ are convex differentiable, the Ballion-Haddad\cite{BH77} theorem implies that $\nabla h$ is $\frac{1}{L_h}$-cocoercive and $\nabla g$ is $\frac{1}{Lg}$-cocoercive. Setting $A_i:=\partial f_i$ for all $i=1,\ldots,m$, $B:=\nabla h$ and $C:=\nabla g$, the hierarchical minimization problem (\ref{opt-plus}) is nothing else but a special case of MIP.

	Hence, in order to solve problem (\ref{opt-plus}) we consider the following algorithm.
	\begin{algorithm}\label{algorithm-IFB-Plus}
		\textit{Initialization}: Choose  positive real sequences $\{\alpha_k\}_{k=1}^\infty$, $\{\beta_k\}_{k=1}^\infty$ and take arbitrary $x_1\in \Hi$. \\
		\textit{Iterative Step}: For a given current iterate $x_{k}\in \Hi$ ($k\geq 1$), compute $$\psi_{0,k}:=x_{k}-\alpha_k\nabla h(x_k)-\alpha_k\beta_k\nabla g(x_k).$$ 
		For $i=1,\ldots,m$, compute
		\begin{eqnarray*}\psi_{i,k}:=\mathrm{prox}_{\alpha_kf_i}(\psi_{i-1,k})
		\end{eqnarray*}
		and define $$x_{k+1}:=\psi_{m,k}.$$
	\end{algorithm}

	To obtain the convergence of the sequence generated by Algorithm \ref{algorithm-IFB-Plus}, we need to assume the following  assumption. 

	\begin{assumption}\label{assumption-IFB-P2}The following statements hold:
	\begin{itemize}
		\item[(S1)] The qualification condition
		$$\argmin g\cap\bigcap_{i=1}^m\mathrm{int}(\mathrm{dom}(\partial f_i))\neq\emptyset$$
		holds.
		\item[(S2)] For every $p\in \ran(N_{\argmin g})$, we have $$\sum_{k=1}^\infty\alpha_k\beta_k\left[g^*\left(\frac{p}{\beta_k}\right)-\sigma_{\argmin g}\left(\frac{p}{\beta_k}\right)\right]<+\infty.$$	
		\item[(S3)] The sequences $\{\alpha_k\}_{k=1}^\infty$ and $\{\beta_k\}_{k=1}^\infty$ satisfy
		$$0<\liminf_{k\to+\infty}\alpha_k\beta_k\leq \limsup_{k\to+\infty}\alpha_k\beta_k <\frac{1}{ L_h}.$$
		\item[(S4)] The sequence $\{\alpha_k\}_{k=1}^{\infty}\in\ell^2\setminus \ell^1$.
	\end{itemize}
\end{assumption}

	One can see that conditions (S1)-(S4) imply hypotheses (H1)-(H4) in Assumption \ref{assumption-IFB-P1}. Note that  the implications (S1)$\Rightarrow$(H1), (S3)$\Rightarrow$(H3)
	and (S4)$\Rightarrow$(H4) are obvious and so it suffices to consider the implication (S2)$\Rightarrow$(H2). In fact, according to the relation in (\ref{fptrick}), one has 
	$$\varphi_{\nabla g}(u,\frac{p}{\beta_k})\leq g(u)+g^*\left(\frac{p}{\beta_k}\right)=g^*\left(\frac{p}{\beta_k}\right),$$
	for all $u\in\argmin g$, which implies that 
	$$\sup_{u\in\argmin g}\varphi_{\nabla g}(u,\frac{p}{\beta_k})\leq g^*\left(\frac{p}{\beta_k}\right).$$
	Thus, we obtain that (S2)$\Rightarrow$(H2). Therefore, the following corollary is a direct consequence of Theorem \ref{theorem-IFB-P1}.

	\begin{corollary}\label{cor-IFB-P1}Let $\{x_k\}_{k=1}^\infty$ be a sequence generated by Algorithm \ref{algorithm-IFB-Plus} and let $\{z_k\}_{k=1}^\infty$ be a sequence of weighted averages defined as in (\ref{ergodic-eqn}). If Assumption \ref{assumption-IFB-P2} holds, then the sequence $\{z_k\}_{k=1}^\infty$ converges weakly to an element in $\mathcal{S}$.
	\end{corollary}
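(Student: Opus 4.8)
The plan is to reduce the statement entirely to the already-proved Theorem~\ref{theorem-IFB-P1}, since Algorithm~\ref{algorithm-IFB-Plus} is exactly GFBP specialized to the operators $A_i=\partial f_i$, $B=\nabla h$, and $C=\nabla g$. First I would verify that this specialization is legitimate: the functions $f_i$ are proper convex lower semicontinuous, so each $\partial f_i$ is maximally monotone; the Baillon--Haddad theorem guarantees that $\nabla h$ is $\tfrac{1}{L_h}$-cocoercive and $\nabla g$ is $\tfrac{1}{L_g}$-cocoercive, so Assumption~\ref{coco-assump} is met with $\mu=\tfrac{1}{L_h}$ and $\eta=\tfrac{1}{L_g}$. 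Moreover $\prox_{\alpha_k f_i}=J_{\alpha_k\partial f_i}$, so the iterative step of Algorithm~\ref{algorithm-IFB-Plus} coincides with that of GFBP. Since $\argmin g=\zer(\nabla g)$ (using $\min g=0$), solving MIP for these data is the same as finding $x$ with $0\in\sum_{i=1}^m\partial f_i(x)+\nabla h(x)+N_{\argmin g}(x)$.

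Next I would confirm that Assumption~\ref{assumption-IFB-P2} implies Assumption~\ref{assumption-IFB-P1}, which is precisely the content of the discussion immediately preceding the corollary. The implications (S1)$\Rightarrow$(H1), (S3)$\Rightarrow$(H3), and (S4)$\Rightarrow$(H4) are immediate substitutions (with $\mu=\tfrac{1}{L_h}$ in (H3)), while (S2)$\Rightarrow$(H2) follows from the Fitzpatrick inequality~(\ref{fptrick}): for every $u\in\argmin g$ one has $\varphi_{\nabla g}(u,\tfrac{p}{\beta_k})\le g(u)+g^*(\tfrac{p}{\beta_k})=g^*(\tfrac{p}{\beta_k})$ since $g(u)=\min g=0$, hence taking the supremum over $u\in\argmin g$ gives the bound needed to dominate the (H2) series by the (S2) series. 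With both Assumptions verified, Theorem~\ref{theorem-IFB-P1}(iii) applies directly.

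The final step is to identify the limit set. Theorem~\ref{theorem-IFB-P1}(iii) guarantees that $\{z_k\}$ converges weakly to some point in $\zer(\sum_{i=1}^m\partial f_i+\nabla h+N_{\argmin g})$, so I must argue that this zero set equals the solution set $\mathcal{S}$ of problem~(\ref{opt-plus}). This is exactly the equivalence between the hierarchical minimization problem and its associated monotone inclusion, valid under the qualification condition~(S1): by the subdifferential sum rule (guaranteed by the interior-of-domain condition, cf.\ \cite[Proposition~27.8]{BC11}) one has $\partial\big(\sum_{i=1}^m f_i+h+\delta_{\argmin g}\big)=\sum_{i=1}^m\partial f_i+\nabla h+N_{\argmin g}$, and a point is a minimizer of~(\ref{opt-plus}) if and only if $0$ lies in this subdifferential at that point.

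The main obstacle, such as it is, lies in this last identification rather than in any new convergence analysis: one must invoke the correct qualification condition to justify the sum rule for the subdifferential so that zeros of the monotone operator coincide exactly with minimizers of~(\ref{opt-plus}). Everything else is bookkeeping, matching the cocoercivity parameters and verifying that (S2) dominates (H2) via the Fitzpatrick bound. Since $f_i$, $h$, and $\delta_{\argmin g}$ are all proper convex lower semicontinuous and (S1) supplies the needed interior condition, the sum rule holds and $\mathcal{S}=\zer(\sum_{i=1}^m\partial f_i+\nabla h+N_{\argmin g})$, completing the proof.
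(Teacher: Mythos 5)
Your proposal is correct and follows essentially the same route as the paper: specialize GFBP to $A_i=\partial f_i$, $B=\nabla h$, $C=\nabla g$, verify that (S1)--(S4) imply (H1)--(H4) (with (S2)$\Rightarrow$(H2) obtained from the Fitzpatrick bound $\varphi_{\nabla g}(u,\cdot)\leq g(u)+g^*(\cdot)=g^*(\cdot)$ on $\argmin g$), and then invoke Theorem~\ref{theorem-IFB-P1}(iii). You are in fact slightly more explicit than the paper about the final identification $\zer(\sum_{i=1}^m\partial f_i+\nabla h+N_{\argmin g})=\mathcal{S}$ via the subdifferential sum rule under the qualification condition, a step the paper leaves implicit.
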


If we assume that the $m^{\textrm{th}}$ function $f_m$ is strongly convex, then its subdifferential $\partial f_m$ is strongly monotone. Using this fact, Theorem \ref{theorem-IFB-P1-strongly} also implies the following corollary.

	\begin{corollary}\label{cor-IFB-P1-strongly}
		Let $\{x_k\}_{k=1}^\infty$ be a sequence generated by Algorithm \ref{algorithm-IFB-Plus}. If Assumption \ref{assumption-IFB-P2}  holds  and the function $f_m$ is strongly convex, then the sequence $\{x_k\}_{k=1}^{\infty}$ converges strongly to the unique element in $\mathcal{S}$. 
	\end{corollary}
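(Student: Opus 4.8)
Corollary \ref{cor-IFB-P1-strongly} is an immediate specialization of Theorem \ref{theorem-IFB-P1-strongly} to the hierarchical minimization setting of Section 4. The plan is to verify that the hypotheses of Theorem \ref{theorem-IFB-P1-strongly} are met under the assumptions of the corollary, and then to transfer its conclusion back to problem (\ref{opt-plus}).

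First I would record the translation dictionary already established in Section 4: setting $A_i := \partial f_i$ for $i=1,\ldots,m$, $B := \nabla h$, and $C := \nabla g$, Algorithm \ref{algorithm-IFB-Plus} is precisely GFBP (Algorithm \ref{algorithm-IFB-P1}) applied to these operators, since $\prox_{\alpha_k f_i} = J_{\alpha_k \partial f_i}$. The Baillon--Haddad theorem guarantees that $B = \nabla h$ is $\frac{1}{L_h}$-cocoercive and $C = \nabla g$ is $\frac{1}{L_g}$-cocoercive, so Assumption \ref{coco-assump} holds with $\mu = 1/L_h$ and $\eta = 1/L_g$. Moreover, as shown in the text immediately preceding Corollary \ref{cor-IFB-P1}, conditions (S1)--(S4) imply hypotheses (H1)--(H4) of Assumption \ref{assumption-IFB-P1}; in particular the qualification condition (S1) coincides with (H1) because $\zer(\nabla g) = \argmin g$ (as $\min g = 0$ and $g$ is convex differentiable), and the Fitzpatrick condition (S2) implies (H2) via the inequality $\varphi_{\nabla g}(u, p/\beta_k) \le g^*(p/\beta_k)$ on $\argmin g$.

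Next I would invoke the strong-monotonicity hypothesis. Since $f_m$ is assumed strongly convex, say with modulus $\gamma > 0$, the preliminaries tell us that its subdifferential $\partial f_m = A_m$ is $\gamma$-strongly monotone. Thus all hypotheses of Theorem \ref{theorem-IFB-P1-strongly} are in force, and the theorem yields that $\{x_k\}_{k=1}^\infty$ converges strongly to the unique zero $u$ of the operator $\sum_{i=1}^m \partial f_i + \nabla h + N_{\zer(\nabla g)}$.

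Finally I would close the loop by identifying this unique zero with the unique solution of (\ref{opt-plus}). Because $C = \nabla g$ we have $\zer(C) = \argmin g$, so the monotone inclusion reads $0 \in \sum_{i=1}^m \partial f_i(u) + \nabla h(u) + N_{\argmin g}(u)$. Under the qualification condition (S1), the subdifferential sum rule (as in the equivalence (\ref{opt})$\Leftrightarrow$(\ref{MIP-opt}) discussed in the Introduction, cf. \cite[Proposition 27.8]{BC11}) identifies the solutions of this inclusion with the minimizers $\mathcal{S}$ of (\ref{opt-plus}); strong convexity of $f_m$ forces the objective $\sum_{i=1}^m f_i + h$ to be strongly convex, hence $\mathcal{S}$ is the singleton $\{u\}$. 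Therefore $\{x_k\}$ converges strongly to the unique element of $\mathcal{S}$, as claimed. The only point requiring a word of care is the passage from the zero of the monotone operator to the minimizer of the hierarchical problem, which relies on the qualification condition (S1) to justify the subdifferential sum; everything else is a direct citation of Theorem \ref{theorem-IFB-P1-strongly}.
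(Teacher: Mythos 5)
Your proposal is correct and follows essentially the same route as the paper: the paper likewise observes that strong convexity of $f_m$ makes $\partial f_m$ strongly monotone and then cites Theorem \ref{theorem-IFB-P1-strongly} together with the already-established implications (S1)--(S4) $\Rightarrow$ (H1)--(H4). Your additional care in identifying the unique zero of $\sum_{i=1}^m\partial f_i+\nabla h+N_{\argmin g}$ with the unique element of $\mathcal{S}$ via the qualification condition is a detail the paper leaves implicit, but it is the same argument.
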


	\begin{remark}
		In addition, there are other works related to the proposed algorithms and convergence results in the literature. Indeed, Attouch, Czarnecki, and Peypouquet \cite{ACP11-2}  investigated  the monotone inclusion problem of the form
		\begin{eqnarray}\label{MIP-ACP}%
		0\in A(x)+N_{\argmin g}(x),
		\end{eqnarray}
		where $A:\Hi\rightrightarrows \Hi$ is a maximally monotone operator and $\argmin g$ is the set of minima of a proper convex lower semicontinuous function $g:\Hi\to (-\infty,+\infty]$.  The iterative schemes for finding a solution of the problem and  some ergodic convergent results were discussed. Note that in their paper, the authors also considered the problem 
		\begin{eqnarray}\label{MIP-ACP-2}%
		0\in \sum_{i=1}^mA_i(x)+N_{\argmin g}(x),
		\end{eqnarray}
		where, for all $i=1,\ldots,m$, $A_i:\Hi\rightrightarrows \Hi$ is a maximally monotone operator. Of course, the results will certainly be special cases of our main results. 
	Furthermore, the authors in \cite{BC16} considered the modifications of the iterative schemes for solving  (\ref{MIP-ACP}) by employing inertial effects. The main feature of this method is that the next iterate is defined by means of the last two iterates. It is also well known that iterative methods with inertial effects may lead to a considerable improvement of the convergence behavior of the method. We refer the reader to \cite{AP16,BC16-num,BCN17-2,CCMY15,CMY15,OCBP14} and the references therein for more insight into this research topic. Of course, one way to address this research direction is to consider the inertial effects of the proposed algorithms and to analyze their convergence results.
	\end{remark}
	
\section{Numerical Examples}
In this section, we present   the behavior of the algorithm introduced in this paper in the context of two numerical examples on constrained elastic net problems and generalized Heron problems. All the experiments were performed under MATLAB 9.1 (R2016b) running on a MacBook Air 13-inch, Early 2015 with a 1.6GHz Intel Core i5 processor and 4GB 1600MHz DDR3 memory.

\subsection{Constrained Elastic Net}

In this subsection, we consider the constrained linear elastic net problems. First mentioned in the seminal work of Zou and Hastie\cite{ZH05}, the elastic net  has been affirmed that it has the outperformance than the classical ridge regression\cite{HK70} related to minimizing of the residual sum of squares adding the penalty term of $\ell_2-$norm. Furthermore, it also outperforms the Lasso regression\cite{T96} related to minimizing of the residual sum of squares imposing an  $\ell_1-$norm penalization.

Suppose that the data set has $m$ observations with $n$ predictors. Let $\mathbf{b}=(b_1,\ldots,b_m)\in\R^m$ be the response vector and $\mathbf{A}=[\mathbf{a}_1|\cdots|\mathbf{a}_m]^\top\in \R^{m\times n}$ be the design matrix of predictors  $\mathbf{a}_i=(a_{1i},\ldots,a_{ni})\in\R^n$  for all $i=1\ldots,m$. The elastic net problem is to find a solution of
\begin{eqnarray*}%
	\begin{array}{ll}
		\textrm{minimize}\indent \frac{1}{2}\|\mathbf{A}x-\mathbf{b}\|_2^2+\gamma\|x\|_1+(1-\gamma)\|x\|_2^2\\
		\textrm{subject to}\indent x\in\R^n,
	\end{array}%
\end{eqnarray*}
where $\gamma\in [0,1]$ is the elastic net parameter. In this case, we say that the problem has the size $(m,n)$. We note that in some practical situation, for instance, Huang et al.\cite{HGYH13} considered the protein inference problem of selecting a proper subset of candidate proteins that best explain the observed peptides. To achieve efficient computation, the authors formulated the considered protein inference problem as a constrained Lasso regression problem. This brings us to the following constrained elastic net problem:
\begin{eqnarray}\label{Con-elastic-net-classical}%
\begin{array}{ll}
\textrm{minimize}\indent F(x):=\frac{1}{2}\|\mathbf{A}x-\mathbf{b}\|_2^2+\gamma\|x\|_1+(1-\gamma)\|x\|_2^2\\
\textrm{subject to}\indent x\in \argmin \frac{1}{2}\dist^2(\cdot,[0,1]^n),
\end{array}%
\end{eqnarray}
 where the distance function is given by
 $$\dist(x,C):=\inf_{c\in C}\|x-c\|_2.$$
 Note that the distance function $\dist(\cdot,C)$ is nonnegative uniformly continuous\cite[Example 1.47]{BC11} and  convex\cite[Coroolary 12.12]{BC11} whenever  $C$ is a convex set. Furthermore, if $C$ is a nonempty closed and convex set, then the function $\frac{1}{2}\dist^2(\cdot,C)$ is convex (Fr\'echet) differentiable\cite[Corollary 12.30]{BC11}.

One can see that  problem (\ref{Con-elastic-net-classical}) is a minimization problem  of the form (\ref{opt-plus}) when setting  $f_1(x)=\frac{1}{2}\|\mathbf{A}x-\mathbf{b}\|_2^2$,  $f_2(x)=\gamma\|x\|_1$, $f_3(x)=(1-\gamma)\|x\|_2^2$,  and $g(x)=\frac{1}{2}\dist^2(x,[0,1]^n)$ for all $x\in \R^n$.

On the other hand, we can consider the constrained elastic net in the sense of the splitting technique as
\begin{eqnarray}\label{Con-elastic-net}%
\begin{array}{ll}
\textrm{minimize}\indent F(x):=\frac{1}{2}\sum_{i=1}^m(\mathbf{a}_i^\top x-b_i)^2+\gamma\|x\|_1+(1-\gamma)\|x\|_2^2\\
\textrm{subject to}\indent x\in \argmin \frac{1}{2}\dist^2(\cdot,[0,1]^n).
\end{array}%
\end{eqnarray}
We observe that  problem (\ref{Con-elastic-net}) can be written in the form (\ref{opt-plus}) when setting $f_i(x)=\frac{1}{2}(\mathbf{a}_i^\top x-b_i)^2$ for all $i=1,\ldots,m$, $f_{m+1}(x)=\gamma\|x\|_1$, $f_{m+2}=(1-\gamma)\|x\|_2^2$, and $g(x)=\frac{1}{2}\dist^2(x,[0,1]^n)$ for all $x\in\R^n$. Thanks to \cite[Example 6.2]{BB15}, it is guaranteed that the assumption (S2) (in general, (H2)) is satisfied.

\begin{table}
	\centering
	\setlength{\tabcolsep}{12pt}
	\caption{\label{random-tb}Comparisons of number of iterations and algorithm runtime between non-splitting and splitting schemes for different sizes of  matrix $\mathbf{A}$.}
	\label{compare-with-A}
	\begin{tabular}{@{}l   r r r r r r @{}}
			\hline\noalign{\smallskip}
		Setting $\rightarrow$  &  \multicolumn{2}{c}{ Non-splitting (\ref{Con-elastic-net-classical}) } & \multicolumn{2}{c}{ Splitting (\ref{Con-elastic-net}) } \\  
		\cmidrule(l){2-3}  \cmidrule(l){4-5} 
		$(m,n)$ $\downarrow$  	  &  \#(Iters) & Time  &  \#(Iters) &  Time\\ 
			\noalign{\smallskip}\hline\noalign{\smallskip}
		$(20,1000)$                	      & 852	            & 111.85  &  512	           & 1.53	 \\
		$(50,1000)$                	      &  301            & 35.95  & 480              &  	3.74  \\
		$(100,1000)$                     & 264             & 32.74  & 492               & 6.53  \\
		$(200,1000)$                     & 226             & 24.24  & 505               & 11.02 \\
		$(300,1000)$                     & 184             & 20.98  & 503               & 15.65 \\ \midrule
		$(20,2000)$	                      & 862             & 539.47 & 523             & 2.11  \\
		$(50,2000)$                       & 703             & 447.15  & 507             & 4.13  \\
		$(100,2000)$                      & 616             & 338.25  & 513             & 6.38 \\
		$(200,2000)$                      & 373            & 163.82   & 521             & 11.77 \\
		$(300,2000)$                      & 344            & 168.69  & 522             & 19.28 \\ \midrule
		$(20,5000)$                        & 1446           & 7486.05 & 578           & 4.70 \\
		$(50,5000)$                        & 1244           & 5834.81  & 509           & 7.18 \\
		$(100,5000)$                       & 1216           & 5754.33  & 541            & 13.96 \\
		$(200,5000)$                       & 886           & 4200.43  & 558            & 29.22 \\
		$(300,5000)$                       & 724            & 3467.67  & 557            & 47.24 \\ \midrule
		$(20,8000)$                        & 2697          & 48759.21 & 666           & 7.50 \\
		$(50,8000)$                        & 2263          & 45112.77  & 567           & 11.69 \\
		$(100,8000)$                      & 2109           & 40141.74  & 632           &  32.67 \\
		$(200,8000)$                     & 1617            & 31036.15 & 608            & 49.21  \\
		$(300,8000)$                      & 1553          & 34217.33 & 613             & 112.09	\\ 
			\noalign{\smallskip}\hline
	\end{tabular}
\end{table}

In this experiment, the MATLAB calculations are performed by GFBP (Algorithm \ref{algorithm-IFB-Plus}) with the sequences  $\alpha_k=1/k$ and $\beta_k=0.9k$ for all $k\geq1$ and the parameter $\gamma=0.5$.  We use the design matrix $\mathbf{A}$ in $\R^{m\times n}$ generated by the gene expression data set $\texttt{14\_Tumors}$ from {\ttfamily\color{red}http://www.gems-system.org/}.
We generate the vector $\mathbf{b}\in \R^m$ corresponding to $\mathbf{A}$ by  the linear model $\mathbf{b} = \mathbf{A}x_0+\varepsilon$, where $\varepsilon\sim\mathcal{N}(0,\|\mathbf{A}x_0\|^2)$ and $50\%$ percent of the components of vector $x_0$ are nonzero components with normally distributed random generation. We perform GFBP (Algorithm \ref{algorithm-IFB-Plus}) and obtain the number of iterations ($k$) and elapsed time (seconds) by  using the relative change
$$\max\left\{\frac{|F(x_k)-F(x_{k-1})|}{|F(x_{k-1})|},\frac{|g(x_k)-g(x_{k-1})|}{|g(x_{k-1})|}\right\}\leq\epsilon,$$
where $\epsilon$ is an optimality tolerance; in this example,  we use the optimality tolerance $\epsilon=10^{-5}$.    

In Table \ref{compare-with-A}, we compare the numerical results of GFBP for splitting and non-splitting cases for various problem sizes. We see that the splitting case behaves significantly better than the non-splitting counterpart for all problem sizes $(m,n)$. In all cases, the computational time for splitting case is 1.3-6500 times less than that of the non-splitting case. We observe that for the splitting case with the same number of predictors $(n)$, the larger number of observations $(m)$ requires more computational time than the small one. However, the non-splitting case is in another direction: for the same number of predictors $(n)$, it can be seen that the larger number of observations $(m)$ needs less CPU time than the small one for almost all cases. The best performance of the splitting case with respect to the non-splitting counterpart is observed for the problem where $m$ and $n$ are too different. In fact, for the case $(20,8000)$, we see that the CPU time for the splitting case is  $6500$ times less than that of the non-splitting case.

Next, we consider the problem in the sense of splitting when $m= 7, 8, 9, 10$ and $n=2^m$. The design matrix is generated by $\mathbf{A}_{i,j}=\frac{1}{(i+j-1)}$ for all $i=1,\ldots,m, j=1,\ldots,n$ and the response vector $b_i$ is generated by $b_i=-\sum_{j=1}^n\mathbf{A}_{i,j}$ for all $i=1,\ldots,m$. Table \ref{Hilber-m-3m}  shows the behaviors of iterations for the problems with parameters $\gamma=0.1, 0.3, 0.5, 0.7,$ and $0.9$, respectively. We use the optimality tolerance $\epsilon=10^{-6}$ to obtain the number of iteration ($k$) and elapsed time (seconds). In Table \ref{Hilber-m-3m},  we see that the parameter $\gamma=0.9$ uses the smallest number of iterations for all size of problem. Furthermore, we can observe that for each parameter $\gamma$, the relation between  problem sizes growths and number of iterations seems to be linear. Table \ref{Hilber-m-3m}  also shows algorithm runtime when the optimality tolerance $\epsilon=10^{-6}$ is reached. From Table \ref{Hilber-m-3m},  we see that the elapsed time are  not almost in the same direction with the number of iterations. In fact, even if the parameter $\gamma=0.9$ use the smallest algorithm runtime in the cases $m= 7, 9,$ and $10$, the parameter $\gamma=0.7$ uses the least algorithm runtime in the case $m= 8$. Based on the provided data, one can say that  in this example, the parameter $\gamma$ has an impact on the performance of the algorithm.

\begin{table}
	\centering
	\caption{Comparisons of number of iterations and algorithm runtime  for various problem  sizes $(m,2^m)$ with different parameter $\gamma$.}
	\label{Hilber-m-3m}
	{\small	\begin{tabular}{@{}l r  r  r  r  r  r  r  r  r  r  r@{}}
			\hline\noalign{\smallskip}
			 $m$ $\rightarrow$ &     \multicolumn{2}{c}{$7$} & \multicolumn{2}{c}{$8$} &   \multicolumn{2}{c}{$9$} & \multicolumn{2}{c}{$10$} \\  
			\cmidrule(l){2-3}  \cmidrule(l){4-5} \cmidrule(l){6-7}  \cmidrule(l){8-9} 
			 $\gamma$ $\downarrow$       &  \#(Iters) 	& Time   		&  \#(Iters) 	&  Time   &  \#(Iters) 	&  Time  &   \#(Iters) 	&  Time  &    \\ 
				\noalign{\smallskip}\hline\noalign{\smallskip}
			0.1   &     2530 & 3.13 & 2553 & 9.88 & 2571 & 32.04 & 2585 & 125.46\\
			0.3  &    2436 & 2.84 & 2469 & 10.69 & 2494 & 32.28 & 2514 & 122.38\\
			0.5 &     2357 & 2.96 & 2398 & 9.22 & 2430 & 37.43 & 2455 & 132.17\\
			0.7  &     2288 & 2.60 & 2335 & 8.87 & 2372 & 38.08 & 2402 & 114.45\\
			0.9  &     2224 & 2.43 & 2278 & 9.51 & 2320 & 29.14 & 2354 & 109.55	\\ 	
			\hline\noalign{\smallskip}
		\end{tabular}
	}
\end{table}

\subsection{Generalized Heron Problems}
The {\it classical Heron problem} was introduced by Heron of Alexandria, focuses on finding a point on a given straight line in a plane such that the sum of distances from it to two given points is minimal.  In this experiment, we consider the {\it generalized Heron problem} of finding a point that minimizes the sum of the distances to given closed convex target sets $C_i \subset \mathbb{R}^n$, $i=1,...,m$ over a system of homogeneous linear equations.

The generalized Heron problem in the context of distance functions is formulated as follows:
\begin{eqnarray*}%
	\begin{array}{ll}
		\textrm{minimize}\indent \sum_{i=1}^m\dist(x,C_i)\\
		\textrm{subject to}\indent \mathbf{A}x=\mathbf{0}_{\R^n},
	\end{array}%
\end{eqnarray*}
where $C_i \subset \mathbb{R}^n$, $i=1,...,m$ are nonempty closed convex subsets and $\mathbf{A}\in \mathbb{R}^{n\times n}$ is a matrix. 
Note that if the number of target sets $m$ is large, then it is possible that the problem has  a  number of solutions and the coordinates $x_i$ may take large values. To overcome this situation, one can have an adding term to penalize the sum of squares of each coordinate, i.e., the square of $\ell_2$-norm. From this notice, the considered problem can be read as follows:
\begin{eqnarray}\label{heron}%
\begin{array}{ll}
\textrm{minimize}\indent F(x):=\sum_{i=1}^m\dist(x,C_i)+\|x\|_2^2\\
\textrm{subject to}\indent x\in\argmin \frac{1}{2}\|\mathbf{A}x\|_2^2.
\end{array}%
\end{eqnarray}

We observe that (\ref{heron})  fits into the framework considered in (\ref{opt-plus}) when setting $f_i(x)=\dist(x,C_i)$, $i=1,...,m$, $f_{m+1}(x)=\|x\|_2^2$, and $g(x)=\frac{1}{2}\|\mathbf{A}x\|_2^2$ for all $x\in\R^n$.

In this experiment we solve a number of randomized problems where the closed convex sets $C_i \subset \mathbb{R}^n$, $i=1,...,m$  are the unit balls with the centers are created randomly in the interval $(-n^2,n^2)$. We generate all elements of the matrix $\mathbf{A}$  randomly from the interval $(-10,10)$. The MATLAB calculations are performed by GFBP (Algorithm \ref{algorithm-IFB-Plus}) with the sequences  $\alpha_k=1/k$ and $\beta_k=0.9k$ for all $k\geq1$.  In all experiments, we terminate GFBP (Algorithm \ref{algorithm-IFB-Plus}) when the relative changes between two consecutive points becomes sufficiently small, i.e.,
$$\max\left\{\frac{|F(x_k)-F(x_{k-1})|}{|F(x_{k-1})|},\frac{|g(x_k)-g(x_{k-1})|}{|g(x_{k-1})|}\right\}\leq 10^{-5}.$$
We use 10 samplings for different randomly chosen  unit balls and starting point, and the results are averaged.

Table \ref{error--tb}  shows the number of iterations  and elapsed time when GFBP (Algorithm \ref{algorithm-IFB-Plus}) satisfies the relative changes $10^{-5}$. As shown in Table \ref{error--tb}, for the same dimension, the larger number of target sets needs a longer time. However, for the same number of target sets, the number of iterations and time seem to be not significantly different even if the dimensions are different.

\begin{table}
	\centering
	\setlength{\tabcolsep}{12pt}
	\caption{\label{error--tb}Behaviors of GFBP on  a generalized Heron problem.}
	{\small \begin{tabular}{@{}l   l  r  r r  r r  r r  r r @{}}
				\hline\noalign{\smallskip}
			Dimension   	  &        \#(Target)   &  \#(Iters)  	& Time  	    			 \\ \midrule
			2                      &       10                 & 199963	& 28.57  \\
			&       50                 & 199863	  & 133.14  \\
			&       100                &199732	  & 259.07 \\
			&       500               & 198657	 & 1268.07  \\ \midrule
			3                      &       10                  & 199973	& 29.07 \\
			&        50                & 199926	     & 135.79 \\
			&        100               & 199855	    & 268.34 \\
			&        500              & 199367	    & 1309.97   \\ \midrule
			5                      &       10                  & 199985 &	30.55  \\
			&        50                & 199970	      & 137.48 \\
			&        100               & 199952	     & 268.39  \\
			&        500              & 199826	    & 1310.53  \\ 
			\hline\noalign{\smallskip}
		\end{tabular}
	}
\end{table}

	\section{Conclusions}
	
	We introduce a novel splitting method, called a generalized forward-backward method with penalty term GFBP, for finding a zero of the sum of a number of maximally monotone operators and the normal cone to the zero of another maximally monotone operator. The advantage of our method is that it allows us not only to compute the resolvent of each operator separately but also to consider a general sense of the constrained set. We provide theorems for  guaranteeing  the convergence of the method. Consequently, we apply GFBP to the minimization of the large-scale hierarchical minimization problem that is the sum of both smooth and nonsmooth convex functions subject to the set of minima of another differentiable convex function. Finally, we propose two numerical experiments on large-scale convex minimization concerning  elastic net  and generalized Heron location problems.

\begin{acknowledgements}
%If you'd like to thank anyone, place your comments here
%and remove the percent signs.
The authors are thankful to two anonymous referees and the Associate Editor for comments and remarks which improved the quality of the paper.

\end{acknowledgements}

% BibTeX users please use one of
%\bibliographystyle{spbasic}      % basic style, author-year citations
%\bibliographystyle{spmpsci}      % mathematics and physical sciences
%\bibliographystyle{spphys}       % APS-like style for physics
%\bibliography{}   % name your BibTeX data base

% Non-BibTeX users please use

\end{document}